\newcommand{\vertiii}[1]{{\left\vert\kern-0.25ex\left\vert\kern-0.25ex\left\vert #1 
        \right\vert\kern-0.25ex\right\vert\kern-0.25ex\right\vert}}
\newcommand{\vertii}[1]{{\left\vert\kern-0.25ex\left\vert #1 
        \right\vert\kern-0.25ex\right\vert}}
\DeclareMathOperator{\rank}{rank}
\DeclareMathOperator{\supp}{supp}
\DeclarePairedDelimiter{\scalprod}{\langle}{\rangle}
\newcommand{\seb}[1]{{\color{red}#1}}
\newcommand{\suchthat}{\ifnum\currentgrouptype=16 \mathrel{}\middle|\mathrel{}\else\mid\fi}
\theoremstyle{thmstyleone}%
\newtheorem{theorem}{Theorem}
\newtheorem{proposition}[theorem]{Proposition}%
\theoremstyle{thmstyletwo}%
\newtheorem{remark}{Remark}%
\theoremstyle{thmstylethree}%
\newtheorem{definition}{Definition}%
\newtheorem{lemma}[theorem]{Lemma}
\newtheorem{corollary}[theorem]{Corollary}
\begin{document}

\title[$L^q$ approximate controllability frequency criterion for linear difference delay equations with distributed delays]{$L^q$ approximate controllability frequency criterion for linear difference delay equations with distributed delays}


\author*[1]{\fnm{Sébastien } \sur{Fueyo}}\email{sebastien.fueyo@gipsa-lab.fr}



\affil*[1]{\orgdiv{Univ. Grenoble Alpes, Inria, CNRS, Grenoble INP, GIPSA-Lab}, \orgaddress{\city{Grenoble}, \postcode{38000}, \country{France}}}




\abstract{Based on an algebraic point of view and the realization theory developed by Y. Yamamoto, the present paper states a necessary and sufficient criterion, given in the frequency domain, for the $L^q$ approximate controllability in finite time of linear difference delay equations with distributed delays. Furthermore, an upper bound for the minimal time of the $L^q$ approximate controllability is obtained.}

\keywords{difference delay equations, approximate controllability, distributed delays, realization theory}



\maketitle

\section{Introduction}

The present paper deals with the approximate controllability of linear difference 
delay equations with distributed delays of the form
\begin{equation}
\label{system_lin_formel2}
 x(t)=\sum_{j=1}^NA_jx(t-\Lambda_j)+\int_0^{\Lambda_N} g(s) x(t-s)ds+Bu(t) , \qquad t \ge 0,
\end{equation}
where, given three positive integers $d$, $m$ and $N$, $g(\cdot)$ belongs to $L^{\infty}([0,\Lambda_N],\mathbb{R}^{d \times d}$), $A_1,\dotsc,A_N$ are fixed $d\times d$ 
matrices with real entries, the state $x$ and the control $u$ belong 
to $\mathbb{R}^d$ and $\mathbb{R}^m$ respectively, and $B$ is a fixed $d \times m$ matrix 
with real entries. Without loss of generality, the delays $\Lambda_1, \dotsc, \Lambda_N$ 
are positive real numbers so that $\Lambda_1< \dotsb <\Lambda_N$. 

Delay systems are instrumental to study the properties of some one dimensional hyperbolic partial differential equations (PDEs) \textit{via} the characteristic or the backstepping methods, see for instance \cite{bastin2016stability,CoNg,baratchart,chitour:hal-04228797,auriol2019explicit}. In particular, the paper \cite{auriol2019explicit} goes through the study of System~\eqref{system_lin_formel2} to stabilize such PDEs systems. The stability properties of System~\eqref{system_lin_formel2} are well-known, thanks to the application of the Laplace transform and the theory of almost periodic functions. A necessary and sufficient frequency stability criterion, which is hugely used in the scientific literature, is given in the book \cite{Hale}. Unfortunately, the controllability properties have been less studied. It seems that the only controllability conditions have been obtained for linear difference delay systems without distributed delays (equivalently $g \equiv 0$), see \cite{loiseau2000algebraic,Mazanti2017Relative,Chitour2020Approximate,chitour:hal-03827918,fueyo-chitour}. The addition of distributed delays makes the analysis more difficult and the methodology has to be adapted to give controllability results for linear difference delay equations with distributed delays.

Since System~\eqref{system_lin_formel2} is infinite dimensional, there are different controllability notions depending in particular on the choice of the state space of System~\eqref{system_lin_formel2}. In this paper, we are interested to control state trajectories belonging to the $L^q$ spaces, \textit{i.e.} the spaces of $q$ integrable functions for $q\in [1,+\infty)$. We focus our attention on the $L^q$ approximate controllability in finite time $T>0$, meaning that we can steer the state trajectories of System~\eqref{system_lin_formel2} toward  all targets in $L^q$, as near as we want, when applying a control during a time $T>0$. The method used amounts to study the controllability problem from an algebraic point of view, \textit{i.e.} with the work in some convolution algebras. This idea has been mainly developed in the papers \cite{yamamoto1989reachability,kamen1976module,rouchaleau1972algebraic} and we adopt this framework to study the controllability properties of System~\eqref{system_lin_formel2}. It is worth noting that the algebraic point of view is also often used for identifiability problems, see for instance the paper \cite{belkoura2005identifiabilty}.

The general suitable structure to study the controllability of delay systems is the one given in the two papers of Y. Yamamoto \cite{YamamotoRealization,yamamoto1989reachability}. In particular, the paper \cite{yamamoto1989reachability} states controllability criteria for general systems called \textit{pseudo-rational}, a notion introduced first by Y. Yamamoto. On the one hand, approximate controllability conditions obtained in \cite{yamamoto1989reachability} are given for pseudo-rational systems but the paper does not address the issue of the controllability in finite time. On the other hand, we can find in the introduction of \cite{yamamoto1989reachability} that delay systems with distributed delays are pseudo-rational but there is no proof in the main corpus of this fact. Thus the results of the present paper are threefold. We prove first that System~\eqref{system_lin_formel2} is pseudo-rational in Theorem~\ref{theorem_pseudo_rational}. Secondly, we are able to provide an upper bound for the minimal time of controllability in Theorem~\ref{theorem_temps_minimal}. As a byproduct, applying the results of the paper \cite{yamamoto1989reachability}, we state a necessary and sufficient criterion, obtained in the frequency domain, for the $L^q$ approximate controllability in finite time $T>2d \Lambda_N$ of System~\eqref{system_lin_formel2}, see Theorem~\ref{th:main_result_dist}.

The remaining of the paper is organised as follows. Section~\ref{sec:not} introduces the notations and the distributional algebras needed, while Section~\ref{sec:pre} recalls some basic properties about the well-posedness and the definition of the $L^q$ approximate controllability in finite time $T>0$ for System~\eqref{system_lin_formel2}. Section~\ref{sec:real_theory} is devoted to the realization theory developed by Y. Yamamoto \cite{YamamotoRealization} and we interpret the control system~\eqref{system_lin_formel2} as an input-output system in this framework. We then prove that the input-output system obtained is \textit{pseudo-rational of order zero} in the sense of Yamamoto. It allows us to characterize the $L^q$ approximate controllability in terms of an approximate left--coprimeness condition on a distributional algebra given in Section~\ref{sec:coprim}. The upper bound on the minimal time of controllability for System~\eqref{system_lin_formel2} is provided in Section~\ref{sec:upper_bound}. Finally, Section~\ref{sec:contro_criterion} summarizes all the sections and it states a necessary and sufficient criterion for the $L^q$ approximate controllability in finite time $T>2d\Lambda_N$ expressed in the complex plane.

\section{Notation}
\label{sec:not}
In this paper, we denote by $\mathbb{N}$ and $\mathbb{N}^*$ the sets of nonnegative and 
positive integers, respectively. We use $\mathbb{R}$, $\mathbb{C}$, $\mathbb{R}_+$, 
and $\mathbb{R}_-$ to denote the sets of real numbers, complex numbers, 
nonnegative, and nonpositive reals respectively. Let $\mathbb{K}$ a commutative ring with unity. Given two positive integers $i$ and $j$, $\mathcal{M}_{i,j}(\mathbb{K})$ is the set of $i \times 
j$ matrices with coefficients in $\mathbb{K}$. For $M \in \mathcal{M}_{i,j}(\mathbb{K})$, we denote by $M^T$ the transposition of the matrix $M$. We use 
$\|\cdot\|$ to denote a norm for every finite-dimensional space (over $\mathbb{K}=\mathbb{C}$) and 
$\vertiii{\,\cdot\,}$ the induced norm for linear maps. The identity matrix in $\mathcal{M}_{i,i}(\mathbb{K})$ is denoted by $I_i$ and the determinant for a square matrix $M \in \mathcal{M}_{i,i}(\mathbb{K})$ is written $\det(M)$. For $M \in \mathcal{M}_{i,j}(\mathbb{K})$, $\rank_{\mathbb{K}} M$ denotes the 
rank of $M$ over the ring $\mathbb{K}$.
Given a positive integer $k$, $A \in \mathcal{M}_{i,j}(\mathbb{K})$ and $B\in  \mathcal{M}_{i,k}(\mathbb{K})$, the bracket $\left[A,B\right]$ denotes the juxtaposition of the two matrices, which hence belongs to $\mathcal{M}_{i,j+k}(\mathbb{K})$.

Let $k \in \mathbb{N}^*$ and $q \in [1,+\infty)$. Given an interval $I$ of $\mathbb{R}$, $L^q(I,\mathbb{R}^k)$ represents the space of $q$-integrable functions on the interval $I$ with values in $\mathbb{R}^k$ endowed of the $L^q$-norm on $I$ denoted $\| \cdot \|_{I,\,q}$.
The space of $q$-integrable functions on compact subsets of $\mathbb{R}$ (respectively, $\mathbb{R}_+$) with values in $\mathbb{R}^k$ is denoted $L^q_{\rm loc}\left(\mathbb{R},\mathbb{R}^k\right)$ (respectively,  $L^q_{\rm loc}\left(\mathbb{R}_+,\mathbb{R}^k\right)$). The semi-norms
\[
\|\phi \|_{[0,a],q}:=\left(\int_0^a \vertii{\phi(t)}^q dt  \right)^{1/q},\qquad \phi \in L^q_{\rm loc}\left(\mathbb{R}_+,\mathbb{R}^k\right), \ a\ge 0,
\]
induce a topology on $L^q_{\rm loc}\left(\mathbb{R}_+,\mathbb{R}^k\right)$, which is then a Fr\'echet space. We note by $\pi$ the truncation on positive time for the functions in $L^q_{\rm loc}\left(\mathbb{R},\mathbb{R}^k\right)$, \textit{e.g.}, for $ \phi \in L^q_{\rm loc}\left(\mathbb{R},\mathbb{R}^k\right)$, $\pi(\phi) \in  L^q_{\rm loc}\left(\mathbb{R}_+,\mathbb{R}^k\right)$ and $\pi(\phi) (t)=\phi(t)$ for $t \ge 0$. 
For $d \in \mathbb{N}^*$, we denote by $ L^1(\mathbb{R},\mathbb{R}^{d \times d})$ the space of integrable real $d \times d$ matrix valued maps defined on $\mathbb{R}$ endowed with the norm
\[
\|f \|_{1}:=\int_{-\infty}^{+ \infty} \vertiii{f(t)} dt,\qquad f \in L^1(\mathbb{R},\mathbb{R}^{d \times d}).
\]
The convolution product in $L^1(\mathbb{R},\mathbb{R}^{d \times d})$ is noted $*$ and $f^{\ast k}$ denotes the convolution product of $f \in L^1(\mathbb{R},\mathbb{R}^{d \times d})$ repeated $k \in \mathbb{N}^*$ times. For $I$ an interval of $\mathbb{R}$, we denote by $L^{\infty}(I,\mathbb{R}^{d \times d})$ the space of real $d \times d$ matrix valued maps with a finite essential supremum norm, \textit{i.e.}, $$\mathrm{ess}\,\underset{t \in I}{\sup}\, \vertiii{f(t)}<+\infty.$$

We next introduce the distributional frameworks needed in the paper. A detailed presentation 
with precise definitions can be found, \textit{e.g.}, in 
\cite{schwartz1966theorie,chitour:hal-03827918,fueyo-chitour}. 
 We use $\mathcal{D}(\mathbb{R})$ to denote the space of $C^{\infty}$ functions defined on $\mathbb{R}$ with 
compact support endowed with its usual topology (canonical LF topology). We also use $\mathcal{D}'(\mathbb{R})$ to denote the spaces of continuous linear forms acting on $\mathcal{D}(\mathbb{R})$, \textit{i.e.}, the spaces of all distributions 
on $\mathbb{R}$. For all $\alpha \in \mathcal{D}'(\mathbb{R})$, we note by $\mathrm{supp}\, \mu$ the support of $\alpha$, and $l(\alpha)$ and $r(\alpha)$ represents the infimum and the supremum of the support of $\alpha$ respectively. For $\alpha \in\mathcal{D}'(\mathbb{R})$ and $\psi \in \mathcal{D}(\mathbb{R})$, $\scalprod{\alpha,\psi}_{\mathcal{D}'(\mathbb{R})}$ denotes the duality product. For a sequence of distribution $\left(\alpha_n\right)_{n \in \mathbb{N}} \in \mathcal D'(\mathbb R)$, we say that $\left(\alpha_n\right)_{n \in \mathbb{N}}$ converges toward $\alpha \in \mathcal D'(\mathbb R)$ in a distributional sense if
\begin{equation*}
    \scalprod{\alpha_n, \psi}_{\mathcal{D}'(\mathbb R)} \underset{n \to +\infty}{\longrightarrow}  \scalprod{\alpha, \psi}_{\mathcal{D}'(\mathbb R)},\quad \forall \psi \in \mathcal D(\mathbb R).
\end{equation*}

We note by $\mathcal{E}'(\mathbb{R}_-)$ and $\mathcal{D}'_+(\mathbb{R})$  the subspaces of $\mathcal{D}'(\mathbb{R})$ made of distributions with compact support included in $\mathbb{R}_-$ and with support bounded on the left respectively. Endowed with the convolution product~$*$, $\mathcal{D}'_{+}(\mathbb{R})$ becomes an algebra. 
Given 
 a distribution $\alpha \in\mathcal{ D}'_+(\mathbb{R})$, we use $\widehat{\alpha}(p)$ to denote the two-sided Laplace transform of $\alpha$ at frequency $p\in \mathbb{C}$, provided that 
the 
Laplace transform
exists.
 We denote by $\delta_x \in \mathcal{D}'(\mathbb{R})$ the Dirac distribution at $x \in \mathbb{R}$. All functions $f \in L^{q}_{\rm loc}(\mathbb{R},\mathbb{R})$ define a distribution as follows
 \begin{equation*}
 \begin{aligned}
 \scalprod{f, \psi}_{\mathcal{D}'(\mathbb R)} =\int_{-\infty}^{+\infty} f(\tau) \psi(\tau) d\tau,\quad \forall \psi \in \mathcal D(\mathbb R).   
      \end{aligned}
\end{equation*}
The Laplace transform of the Dirac distribution $\delta_x$, $x \in \mathbb{R}$, and a function $f \in L^q_{\rm loc}(\mathbb{R},\mathbb{R})$ with compact support are equal to $e^{-px}$ and $\int_{-\infty}^{+ \infty} f(t)e^{-pt} dt$, for all $p \in \mathbb{C}$, respectively.

 With a slight abuse of language, we keep the former notations introduced when dealing with matrices whose entries belong to the spaces $L^q_{\rm loc}(\mathbb{R},\mathbb{R})$, $\mathcal D'(\mathbb R)$, $\mathcal E'(\mathbb R_-)$ and $\mathcal D^\prime_+(\mathbb R)$.

\section{Well-posedness, controllability definitions and prerequisites}
\label{sec:pre}

We study the existence and the uniqueness of solutions of System~\eqref{system_lin_formel2}. Otherwise stated, $q$ denotes always an element of $[1,+\infty)$ in the remaining of the paper. Linear difference delay equations with distributed delays fit the framework developed in the book \cite{Hale} with the minor difference that we consider in the present paper solutions in the $L^q$ spaces, $q \in [1,+\infty)$, instead of the space of continuous functions. Therefore we only sketches the proofs of the results stated in this section. We start by considering System~\eqref{system_lin_formel2} without control, \textit{i.e.} the system given by the equation
\begin{equation}
\label{system_lin_formel2_without_input}
 x(t)=\sum_{j=1}^NA_jx(t-\Lambda_j)+\int_0^{\Lambda_N} g(s) x(t-s)ds, \qquad t \ge 0.
\end{equation}

The existence of solutions of System~\eqref{system_lin_formel2_without_input} is summarized in the  following proposition. 

\begin{proposition}
Let $T>0$. For all $\phi \in L^q([-\Lambda_N,0],\mathbb{R}^d)$, there exists a unique solution $x(\cdot)$ belonging to $  L^q([-\Lambda_N,T],\mathbb{R}^d)$ such that $x(\theta)=\phi(\theta)$, for $\theta \in [-\Lambda_N,0]$, and it satisfies \eqref{system_lin_formel2_without_input} for almost all $t \in [0,T]$.

\end{proposition}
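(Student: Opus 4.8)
The plan is to combine the classical \emph{method of steps} --- which handles the pointwise delays $x(t-\Lambda_j)$ and relies crucially on $\Lambda_1>0$ --- with a Neumann series (equivalently, contraction) argument for the distributed term, which after reduction to a single step becomes a Volterra integral equation of the second kind. This is the $L^q$ counterpart of the classical well-posedness result for continuous solutions in \cite{Hale}, so only the points specific to the $L^q$ setting and to the presence of $g$ need genuine attention.

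First I would fix $K\in\mathbb{N}^\ast$ with $K\Lambda_1\ge T$, set $I_k:=[k\Lambda_1,(k+1)\Lambda_1]$ for $0\le k\le K-1$, and prove by induction on $k$ that there is a unique $x\in L^q([-\Lambda_N,(k+1)\Lambda_1],\mathbb{R}^d)$ with $x\equiv\phi$ on $[-\Lambda_N,0]$ satisfying \eqref{system_lin_formel2_without_input} for a.e.\ $t\in[0,(k+1)\Lambda_1]$. For the inductive step, restrict \eqref{system_lin_formel2_without_input} to $t\in I_k$. Since $\Lambda_j\ge\Lambda_1$ for every $j$, one has $t-\Lambda_j\le k\Lambda_1$, so each $A_jx(t-\Lambda_j)$ only involves values of $x$ on $[-\Lambda_N,k\Lambda_1]$, already known. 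Splitting the distributed integral at $s=t-k\Lambda_1\in[0,\Lambda_1]\subseteq[0,\Lambda_N]$, the part over $[t-k\Lambda_1,\Lambda_N]$ again involves only known values of $x$, while the part over $[0,t-k\Lambda_1]$ involves $x$ on $I_k$. After the substitution $\tau=t-s$, the equation on $I_k$ reads
\begin{equation*}
x(t)=F_k(t)+\int_{k\Lambda_1}^{t}g(t-\tau)\,x(\tau)\,d\tau,\qquad t\in I_k,
\end{equation*}
with $F_k(t):=\sum_{j=1}^N A_jx(t-\Lambda_j)+\int_{t-k\Lambda_1}^{\Lambda_N}g(s)\,x(t-s)\,ds$ built from the already constructed piece of $x$.

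Next I would check that $F_k\in L^q(I_k,\mathbb{R}^d)$: the finite sum is a combination of translates of $L^q$ pieces of the known part of $x$; extending $g$ and the known part of $x$ by $0$ to $\mathbb{R}$, one has $\int_{t-k\Lambda_1}^{\Lambda_N}g(s)x(t-s)\,ds=(g*x)(t)$ for $t\in I_k$ (the zero extension of $x$ kills exactly the indices $s<t-k\Lambda_1$), and since $g\in L^1(\mathbb{R},\mathbb{R}^{d\times d})$ --- here $g\in L^\infty$ with compact support is used --- Young's convolution inequality places this term in $L^q_{\rm loc}$. It then remains to solve $x=F_k+\mathcal{V}_k x$ in $L^q(I_k,\mathbb{R}^d)$, where $(\mathcal{V}_k x)(t):=\int_{k\Lambda_1}^{t}g(t-\tau)x(\tau)\,d\tau$. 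Since $\|g\,\mathbf{1}_{[0,\eta]}\|_{1}\to 0$ as $\eta\to 0^+$, the operator $\mathcal{V}_k$ is a strict contraction on $L^q([a,a+\eta_0],\mathbb{R}^d)$ for every subinterval of length $\le\eta_0$ with $\eta_0>0$ small enough; equivalently, $\mathcal{V}_k$ is quasi-nilpotent on $L^q(I_k,\mathbb{R}^d)$, so $I-\mathcal{V}_k$ is boundedly invertible and $x|_{I_k}:=(I-\mathcal{V}_k)^{-1}F_k$ is the unique solution on $I_k$. This closes the induction; taking $k=K-1$ and restricting to $[-\Lambda_N,T]$ gives a solution with the stated properties, and uniqueness follows by running the same argument on the difference of two solutions, which satisfies the homogeneous problem ($F_k\equiv 0$ at every step), so the invertibility of $I-\mathcal{V}_k$ forces it to vanish on each $I_k$.

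The only point that is not entirely routine --- i.e.\ the genuine difference with the case $g\equiv 0$ of \cite{loiseau2000algebraic,Mazanti2017Relative,Chitour2020Approximate,chitour:hal-03827918,fueyo-chitour,Hale} --- is that the distributed term couples $x(t)$ to $x(t-s)$ for arbitrarily small $s>0$, so on each step interval one does not merely read off $x$ from its past but must invert a Volterra operator; the key fact making the scheme go through is the quasi-nilpotency on $L^q$ of a bounded interval of a Volterra operator with $L^1$ kernel. All the remaining $L^q$ bookkeeping is handled by Young's inequality.
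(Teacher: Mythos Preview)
Your argument is correct and is essentially the same contraction/time-stepping approach as the paper's proof; the paper simply chooses a small step length $\epsilon$ so that the full right-hand side defines a strict contraction on $L^q([-\Lambda_N,\epsilon],\mathbb{R}^d)$, rather than first stepping by $\Lambda_1$ and then inverting the residual Volterra operator on each step interval. Both variants rest on the same key fact, namely that $\|g\,\mathbf{1}_{[0,\eta]}\|_{1}\to 0$ as $\eta\to 0^+$ (guaranteed here by $g\in L^\infty([0,\Lambda_N],\mathbb{R}^{d\times d})$).
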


\begin{proof}
   The proof is classical and it is done by applying a fixed point theorem. Let $\epsilon>0$ and $\phi \in L^q([-\Lambda_N,0],\mathbb{R}^d)$ , we introduce the operator $T(\phi): L^{q}([-\Lambda_N,\epsilon],\mathbb{R}^d) \longrightarrow L^{q}([-\Lambda_N,\epsilon],\mathbb{R}^d)$ defined by
\begin{equation}
T(\phi)x(t)=\begin{dcases}
 \phi(t), \text{ if $t\in [-\Lambda_N,0)$}, \\
 \sum_{j=1}^NA_jx(t-\Lambda_j)+\int_0^{\Lambda_N} g(s) x(t-s)ds,&\text{ if 
  $t \in [0,\epsilon]$},
\end{dcases}
\end{equation}
with $x(\cdot) \in L^q([-\Lambda_N,\epsilon],\mathbb{R}^d)$. Since $g(\cdot)$ belongs to $L^{\infty}([0,\Lambda_N],\mathbb{R}^d)$, we can take $\epsilon$ enough small so that the operator $T(\phi)$ is strictly contractive. By the Banach fixed point theorem, we get that $T(\phi)$ has a unique fixed point providing a solution of \eqref{system_lin_formel2_without_input} on the interval $[-\Lambda_N,\epsilon]$. One complete the proof of the theorem by successively stepping interval of length $\epsilon+\Lambda_N$.
\end{proof}

For $T \ge 0$, $z(\cdot) \in L^q([-\Lambda_N,T],\mathbb{R}^d)$ and $t \in [0,T]$, we introduce the notation $z_t \in L^q([-\Lambda_N,0],\mathbb{R}^d)$ meaning that $z_t(\theta)=z(t+\theta)$ for $\theta \in [-\Lambda_N,0]$. Let $T\ge 0$, for all $t \in [0,T]$, we define the operator solution of System~\eqref{system_lin_formel2_without_input}  given by
    \begin{align*}
        U_q(t): L^q([-\Lambda_N,0],\mathbb{R}^d) &\longrightarrow L^q([-\Lambda_N,0],\mathbb{R}^d),\\
       \phi \qquad \quad &\mapsto \qquad \quad  U_q(t) \phi=x_t,
    \end{align*}
where $x(\cdot) \in L^q([-\Lambda_N,T],\mathbb{R}^d)$ is the unique function satisfying $x_0=\phi$ and Equation~\eqref{system_lin_formel2_without_input} for almost all $t \in [0,T]$. A fixed point theorem also provide existence and uniqueness of solutions for System~\eqref{system_lin_formel2}, meaning that the control problem is well-posed in $L^q$ spaces.

\begin{proposition}
\label{prop:sol_existence}
    Let $T \ge 0$. For all $\phi \in L^q([-\Lambda_N,0],\mathbb{R}^d)$ and $u \in L^q([0,T],\mathbb{R}^m)$, there exists a unique solution $x(\cdot) \in L^q([-\Lambda_N,T],\mathbb{R}^d)$ such that $x_0=\phi$ and satisfying \eqref{system_lin_formel2} for almost all $t \in [0,T]$.

\end{proposition}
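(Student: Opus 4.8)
The plan is to adapt the fixed-point argument of the previous proposition to the controlled equation~\eqref{system_lin_formel2}, treating the control term $Bu(\cdot)$ as an inhomogeneous forcing term. First I would fix $\phi \in L^q([-\Lambda_N,0],\mathbb{R}^d)$ and $u \in L^q([0,T],\mathbb{R}^m)$, choose $\epsilon>0$ to be specified, and define on $L^q([-\Lambda_N,\epsilon],\mathbb{R}^d)$ the affine operator
\begin{equation*}
\widetilde{T}(\phi,u)x(t)=\begin{dcases}
 \phi(t), &\text{if } t\in [-\Lambda_N,0),\\
 \sum_{j=1}^N A_j x(t-\Lambda_j)+\int_0^{\Lambda_N} g(s)x(t-s)\,ds + Bu(t), &\text{if } t\in[0,\epsilon].
\end{dcases}
\end{equation*}
Since $Bu(\cdot)$ does not depend on $x$, the difference $\widetilde{T}(\phi,u)x_1 - \widetilde{T}(\phi,u)x_2$ coincides exactly with $T(\phi)x_1 - T(\phi)x_2$ from the uncontrolled proposition, so the same estimate shows $\widetilde{T}(\phi,u)$ is strictly contractive for $\epsilon$ small enough; here one uses $B u \in L^q([0,\epsilon],\mathbb{R}^d)$ (a consequence of $u \in L^q$ and $B$ being a constant matrix) only to check that $\widetilde{T}(\phi,u)$ maps $L^q([-\Lambda_N,\epsilon],\mathbb{R}^d)$ into itself. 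The Banach fixed point theorem then yields a unique solution on $[-\Lambda_N,\epsilon]$.

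Next I would extend the solution to all of $[-\Lambda_N,T]$ by the same stepping procedure: having obtained $x$ on $[-\Lambda_N,k\epsilon]$ for some $k$, restart the fixed-point argument on the interval $[k\epsilon-\Lambda_N,k\epsilon+\epsilon]$ with the now-known values of $x$ on $[k\epsilon-\Lambda_N,k\epsilon]$ playing the role of the new initial data and the shifted restriction of $u$ playing the role of the new forcing term; since $\epsilon$ depends only on $g$ and the delays (and not on the data), finitely many steps cover $[0,T]$. Uniqueness globally follows from uniqueness on each step. Since the proofs of this section are only sketched, I would keep this brief and simply note that the argument is the one of the preceding proposition with the harmless additive term $Bu$.

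The only mild subtlety—hardly an obstacle—is bookkeeping the function spaces when $Bu$ is merely $L^q$ rather than continuous: one must verify the fixed-point map is a genuine self-map of $L^q([-\Lambda_N,\epsilon],\mathbb{R}^d)$, which is immediate since each term on the right-hand side ($A_j x(\cdot-\Lambda_j)$, the $L^\infty$-kernel convolution $\int_0^{\Lambda_N} g(s)x(\cdot-s)\,ds$, and $Bu$) lies in $L^q$ on the relevant interval. Everything else is verbatim the previous proof, so I would present it in two or three sentences rather than repeating the contraction estimate.
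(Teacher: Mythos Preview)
Your proposal is correct and matches the paper's approach exactly: the paper gives no detailed proof of this proposition, merely remarking before its statement that ``a fixed point theorem also provide existence and uniqueness of solutions for System~\eqref{system_lin_formel2}'', i.e., the same contraction argument as in the preceding proposition with the additive forcing term $Bu$.
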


From now on, given  $T > 0$, $u \in L^q([0,T],\mathbb{R}^m)$ and $\phi \in L^q([-\Lambda_N,0],\mathbb{R}^d)$, 
we write 
 $x(\cdot) \in L^q([-\Lambda_N,T],\mathbb{R}^d)$ 
to denote  
 the solution given by Proposition~\ref{prop:sol_existence}. We continue this section by defining the controllability notions that we consider in this paper.

\begin{definition}\label{def:Lq-cont} System~\eqref{system_lin_formel2} is:
\begin{enumerate}[1)]
\item \emph{$L^q$ approximately controllable in time $T>0$} if 
for every  $\phi,\psi \in L^q([-\Lambda_N,0],\mathbb{R}^d)$
and $\epsilon>0$, there exists $u \in L^q([0,T],\mathbb{R}^m)$ such that 
\[
\|x_T- \psi\|_{[-\Lambda_N,0],q} < \epsilon;
\]
\item \emph{$L^q$ approximately controllable from the origin} if for $\phi\equiv 0$,
every $\psi \in L^q([-\Lambda_N,0],\mathbb{R}^d)$
and $\epsilon>0$, there exist $T_{\epsilon,\psi}>0$ and $u \in L^q([0,T_{\epsilon,\psi}],\mathbb{R}^m)$ such that
\[
\|x_{T_{\epsilon,\psi}}- \psi\|_{[-\Lambda_N,0],q} < \epsilon;
\]

\item \emph{$L^q$ approximately controllable from the origin in time $T>0$} if for $\phi\equiv 0$,
every $\psi \in L^q([-\Lambda_N,0],\mathbb{R}^d)$
and $\epsilon>0$, there exists $u \in L^q([0,T],\mathbb{R}^m)$ such that
\[
\|x_{T}- \psi\|_{[-\Lambda_N,0],q} < \epsilon.
\]

\end{enumerate}
\end{definition}

One of the major questions concerning the approximate controllability in finite time $T$ is to determine the \textit{minimal time} of controllability.

\begin{definition} We define $T_{ \rm min,q}$ the \textit{minimal time} of the $L^q$ approximate controllability  as follows:
\begin{align*}
T_{ \rm min,q}&:=\inf_{T \in \mathbb{R}_+} \{ \mbox{System~\eqref{system_lin_formel2} is $L^q$ approximately controllable in time $T$}\}.
\end{align*}
\end{definition}

We shall prove in the next pages of the paper that the three notions of controllability given in Definition~\ref{def:Lq-cont} are equivalent and we will give an upper bound on the minimal time of controllability. We show now that the $L^q$ approximate controllability from the origin in time $T>0$ is equivalent to the $L^q$ approximately controllable in time $T>0$. A such result amounts to use a representation formula for solutions of System~\eqref{system_lin_formel2}, called variation-of-constants formula, to state controllability results, see for instance \cite{coron}. In this view, we introduce the \textit{fundamental solution} of System~\eqref{system_lin_formel2_without_input}:

\begin{equation}  
 \label{solution_fondamentale}
X(t)=
\begin{dcases}
 0 \mbox{ for $t<0$}, \\
I_d+\sum\limits_{j=1}^N \seb{A_j} X(t-\Lambda_j) +\int_0^{\Lambda_N} g(s) X(t-s)ds,\quad \mbox{for $t \ge 0$}.
\end{dcases}
 \end{equation}

The map $t \mapsto X(t)$ is a left-continuous function of bounded variation with possible jumps at the set $\{\Lambda_1 n_1+\cdots+ \Lambda_N n_N\,|\, n_1,\cdots,n_N \in \mathbb{N}^N \}$. Thus, for all $T \ge 0$, $t \in [0,T]$ and $u \in L^q([0,T],\mathbb{R}^d)$, the operator $E(t)$ from  $L^q ([0,t],\mathbb{R}^d)$ into $ L^q ([-\Lambda_N,0],\mathbb{R}^d)$ defined by
\begin{equation}
\label{def_Et}
E(t)u(\theta)=-\int_{0^-}^{t^-} d_{\alpha} X(t+\theta-\alpha)Bu(\alpha),\quad \mbox{for $\theta \in [-\Lambda_N,0]$,}
\end{equation}
 has a sense. The integral in \eqref{def_Et} is understood as a Lebesgue-Stieltjes integral on $[0,t)$. It allows to state the following representation formula.

\begin{proposition}
\label{prop:representation}
Let $T\ge 0$. For all $u \in L^q([0,T],\mathbb{R}^m)$ and $\phi \in L^q([-\Lambda_N,0],\mathbb{R}^d)$, the unique solution of System~\eqref{system_lin_formel2}, noted $x(\cdot) \in L^q([-\Lambda_N,T],\mathbb{R}^d)$, satisfies
\begin{equation}
\label{eq:representation}
    x_t=U_q(t)\phi+E(t)u,\quad t\in [0,T].
\end{equation}
\end{proposition}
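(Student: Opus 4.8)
The plan is to exploit the linearity of System~\eqref{system_lin_formel2} together with the uniqueness part of Proposition~\ref{prop:sol_existence} in order to reduce the claim to the case $\phi\equiv 0$. Let $x^{(1)}(\cdot)\in L^q([-\Lambda_N,T],\mathbb{R}^d)$ be the solution of the uncontrolled system~\eqref{system_lin_formel2_without_input} with $x^{(1)}_0=\phi$, so that $x^{(1)}_t=U_q(t)\phi$ by the very definition of $U_q$, and let $x^{(2)}(\cdot)$ be the solution of~\eqref{system_lin_formel2} with $x^{(2)}_0\equiv 0$ and control $u$. Then $x^{(1)}+x^{(2)}$ satisfies~\eqref{system_lin_formel2} with initial datum $\phi$ and control $u$, hence coincides with $x(\cdot)$ by uniqueness; therefore $x_t=U_q(t)\phi+x^{(2)}_t$, and it remains to prove $x^{(2)}_t=E(t)u$ for $t\in[0,T]$.

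To this end, set $y(t):=-\int_{0^-}^{t^-} d_{\alpha} X(t-\alpha)Bu(\alpha)$ for $t\ge 0$ and $y(t):=0$ for $t\in[-\Lambda_N,0)$, the integral being the Lebesgue--Stieltjes integral against the bounded variation function $X$. By the mapping properties recalled after~\eqref{def_Et} one has $y\in L^q([-\Lambda_N,T],\mathbb{R}^d)$, and, using that $X$ vanishes on $\mathbb{R}_-$, a direct inspection gives $y(t+\theta)=E(t)u(\theta)$ for every $t\in[0,T]$ and $\theta\in[-\Lambda_N,0]$ (the case $t+\theta<0$ being trivial on both sides); thus $y_t=E(t)u$, and it suffices to show that $y$ solves~\eqref{system_lin_formel2} with $y_0\equiv 0$, for then $y=x^{(2)}$ by uniqueness. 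Plugging the definition of $y$ into the right-hand side of~\eqref{system_lin_formel2}, interchanging the finite sum $\sum_j A_j(\cdot)$ and the integral $\int_0^{\Lambda_N} g(s)(\cdot)\,ds$ with the Stieltjes integral (Fubini), and inserting the defining relation~\eqref{solution_fondamentale}, namely $X(r)=I_d+\sum_j A_j X(r-\Lambda_j)+\int_0^{\Lambda_N} g(s)X(r-s)\,ds$ for $r\ge 0$, one checks that the terms produced by $\sum_j A_j X(\cdot-\Lambda_j)+\int g$ reassemble $y(t)$ minus the contribution of the constant matrix $I_d$; that remaining contribution is exactly the atom of mass $I_d$ carried by $X$ at the origin (the jump $X(0^+)-X(0^-)=I_d$ read off~\eqref{solution_fondamentale}), which yields the instantaneous term $Bu(t)$. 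Hence $y(t)=\sum_j A_j y(t-\Lambda_j)+\int_0^{\Lambda_N} g(s)y(t-s)\,ds+Bu(t)$ for almost every $t\in[0,T]$, which is what is needed.

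The delicate point is the Stieltjes bookkeeping in that last computation: justifying the interchange of integrations when one of the integrators is the singular measure $dX$, and, above all, tracking the endpoint conventions ($0^-$, $t^-$) so that the unit jump of $X$ at the origin is counted exactly once and produces the term $Bu(t)$, with neither a spurious nor a missing boundary atom. If one wishes to avoid manipulating $dX$ directly, an equivalent route is to prove the identity first for piecewise constant controls $u$, for which $y$ can be computed explicitly by the method of steps over the intervals delimited by the jump set of $X$ and compared term by term with $x^{(2)}$, and then to pass to an arbitrary $u\in L^q([0,T],\mathbb{R}^m)$ by density, using that both $u\mapsto x^{(2)}_t$ and $u\mapsto E(t)u$ are continuous linear maps from $L^q([0,t],\mathbb{R}^m)$ to $L^q([-\Lambda_N,0],\mathbb{R}^d)$ --- the former by the fixed-point estimate underlying Proposition~\ref{prop:sol_existence}, the latter by Young's inequality for the absolutely continuous part of $dX$ together with the boundedness of translations on $L^q$ for its atomic part.
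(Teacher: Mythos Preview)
Your argument is correct: the reduction by linearity and uniqueness to the zero--initial-datum case is standard, and the verification that $y(t)=-\int_{0^-}^{t^-} d_\alpha X(t-\alpha)Bu(\alpha)$ solves~\eqref{system_lin_formel2} with $y_0\equiv 0$ by plugging in the defining relation~\eqref{solution_fondamentale} for $X$ is the right mechanism --- the unit jump of $X$ at the origin is precisely what produces the instantaneous term $Bu(t)$. You also correctly flag the endpoint and Fubini issues and offer a clean density workaround.

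The paper, however, does not carry out any of this. Its proof is a one-line deferral: it observes that System~\eqref{system_lin_formel2} can be recast as a Volterra--Stieltjes equation and invokes \cite[Chapter~9]{Hale}, where the variation-of-constants formula for such equations is established in the continuous setting (the passage to $L^q$ being routine). So your route is genuinely different in character: you give a self-contained direct verification tailored to this specific system, whereas the paper subsumes the result under a general theory and cites it. Your approach buys transparency and independence from the reference; the paper's buys brevity and avoids rehearsing the Stieltjes bookkeeping you rightly call delicate.
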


\begin{proof}
By trading System~\eqref{system_lin_formel2} with a Volterra-Stieltjes equation, the proposition can be deduced from \cite[Chapter 9]{Hale}. 
\end{proof}

In particular, we have the following equivalence between the controllability notions.

\begin{proposition}
\label{coro2}
  System~\eqref{system_lin_formel2} is $L^q$ approximately controllable from the origin in time $T>0$ if and only if it is $L^q$ approximately controllable in time $T>0$.
\end{proposition}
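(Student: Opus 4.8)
The plan is to exploit the variation-of-constants formula from Proposition~\ref{prop:representation}, which decouples the contribution of the initial condition $\phi$ from that of the control $u$. Writing $x_T = U_q(T)\phi + E(T)u$, the set of states reachable in time $T$ from a given $\phi$ is the affine subspace $U_q(T)\phi + \Ran E(T) \subseteq L^q([-\Lambda_N,0],\mathbb{R}^d)$, while the set of states reachable from the origin in time $T$ is exactly $\Ran E(T)$ (more precisely, its closure, since we only ask to approximate targets). So the two notions of approximate controllability in time $T$ amount, respectively, to the density of $U_q(T)\phi + \Ran E(T)$ for every $\phi$, and to the density of $\Ran E(T)$. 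The density of a linear subspace trivially implies the density of any of its affine translates, which gives the implication "controllable from the origin in time $T$ $\Rightarrow$ controllable in time $T$".

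For the converse, I would argue as follows. Suppose System~\eqref{system_lin_formel2} is $L^q$ approximately controllable in time $T$. Taking $\phi \equiv 0$ in the definition already says that for every target $\psi$ and every $\epsilon > 0$ there is a control $u$ steering the origin to within $\epsilon$ of $\psi$ in time $T$; but that is precisely the definition of $L^q$ approximate controllability from the origin in time $T$. In other words, the notion "controllable in time $T$" is a priori stronger (it demands the reachability-from-$\phi$ property for all $\phi$, including $\phi\equiv 0$), so the implication is immediate by specialization. Thus the real content is only the forward direction, and both directions are in fact elementary once the affine structure of the reachable set is in place.

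The one point that deserves a sentence of care is that the second and third items of Definition~\ref{def:Lq-cont} fix the \emph{same} time $T$ for all targets, so no compactness or uniform-in-$\psi$ argument is needed here; the equivalence is a pointwise-in-$\psi$ statement. Concretely, the proof reads: by Proposition~\ref{prop:representation}, for any $\phi$ and $u$ we have $x_T = U_q(T)\phi + E(T)u$. If the system is approximately controllable from the origin in time $T$, then $\Ran E(T)$ is dense in $L^q([-\Lambda_N,0],\mathbb{R}^d)$; hence for arbitrary $\phi,\psi$ and $\epsilon>0$, density of $\Ran E(T)$ applied to the target $\psi - U_q(T)\phi$ yields $u$ with $\|E(T)u - (\psi - U_q(T)\phi)\|_{[-\Lambda_N,0],q} < \epsilon$, i.e. $\|x_T - \psi\|_{[-\Lambda_N,0],q} < \epsilon$, proving approximate controllability in time $T$. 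Conversely, approximate controllability in time $T$ specialized to $\phi \equiv 0$ is, verbatim, approximate controllability from the origin in time $T$.

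I do not anticipate a genuine obstacle: the only thing to watch is bookkeeping of which space the norm lives in ($L^q([-\Lambda_N,0],\mathbb{R}^d)$, via $x_T = x_{(\cdot)}|_{[-\Lambda_N,0]}$-shifted) and making sure the control horizon $[0,T]$ matches in the application of the representation formula. If one wanted to be slightly more careful about the phrase "as near as we want" one could note that $\overline{\Ran E(T)}$ is a closed subspace and the density statements above are equivalent to $\overline{\Ran E(T)} = L^q([-\Lambda_N,0],\mathbb{R}^d)$, but this adds nothing essential to the argument.
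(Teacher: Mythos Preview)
Your argument is correct and is essentially the same as the paper's: both use the variation-of-constants formula $x_T = U_q(T)\phi + E(T)u$ to reduce the forward implication to approximating the shifted target $\psi - U_q(T)\phi$ by an element of $\Ran E(T)$, and both observe that the converse is immediate by specializing to $\phi\equiv 0$. The only cosmetic difference is that you phrase the forward step in terms of density of the linear subspace $\Ran E(T)$ and its affine translates, whereas the paper writes it out directly.
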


\begin{proof}
First, we notice from Proposition~\ref{prop:representation} that
 System~\eqref{system_lin_formel2} is $L^q$ approximately controllable from the origin in time $T>0$ if, for 
every $\psi \in L^q([-\Lambda_N,0],\mathbb{R}^d)$
and $\epsilon>0$, there exists $u \in L^q([0,T],\mathbb{R}^m)$ such that
\[
\|E(T)u- \psi\|_{[-\Lambda_N,0],q} < \epsilon.
\]

To achieve the proof of the proposition, we have just to show that the $L^q$ approximate controllability from the origin in time $T>0$ implies the $L^q$ approximate controllability in time $T>0$ (the converse is obvious). Let $T>0$ and  $\phi,\psi \in L^q([-\Lambda_N,0],\mathbb{R}^d)$. We want to find a family  of controls $u \in L^q([0,T],\mathbb{R}^m)$ that steers the state $x_T$ as near as we want of $\psi$. Let $\epsilon>0$ and $\tilde{x}_T=U_q(T)\phi$. By the introductory remark of this proof, we can find 
a $u \in L^q([0,T],\mathbb{R}^m)$ such that 
\[
\|E(T)u- \psi+\tilde{x}_T\|_{[-\Lambda_N,0],q} < \epsilon.
\]
Thanks to the representation formula~\eqref{eq:representation} given in Proposition~\ref{prop:representation}, we get that 
\[
\|x_T-\psi\|_{[-\Lambda_N,0],q} < \epsilon,
\]
where $x_T=\tilde{x}_T+E(T)u$. It concludes the proof of the proposition.
\end{proof}

We introduce in the next section the realization theory to undertake the controllability issues of System~\eqref{system_lin_formel2}. This theory will allow to show two properties. On the one hand, we have that the $L^q$ approximate controllability from the origin is equivalent to the $L^q$ approximately controllable from the origin in time $T$, for all $T>2d\Lambda_N$. On the other hand, we are able to prove that the $L^q$ approximately controllable from the origin in time $T$, for all $T>2d\Lambda_N$ is entirely characterized by a frequency criterion. 

\section{Realization theory}

\label{sec:real_theory}

The realization theory has been useful to provide controllability results related to delay systems, see for instance \cite{yamamoto1989reachability,chitour:hal-03827918}. The idea is to interpret the control problem of System~\eqref{system_lin_formel2}, starting from the origin, in terms of an input-output system. The input is then the control of the system applied for negative times and the output is the state (modulo a translation) of System~\eqref{system_lin_formel2} for positive times. More precisely, we consider the following system
\begin{equation}
\label{syst_lin_avec_sortie}
\begin{dcases}
 x(t)=\sum_{j=1}^NA_jx(t-\Lambda_j)+\int_0^{\Lambda_N} g(s)x(t-s)ds+Bu(t),&\text{ for 
  $t\ge \inf \supp(u)$}, \\
  x(t)=0,&\text{ for 
  $t<\inf \supp(u)$},\\
y(t)=x(t-\Lambda_N),&\text{ for 
$t \in [0,+\infty)$},
\end{dcases}
\end{equation}
where the input $u$ belongs to  
\begin{equation*}
\Omega_{q}=\{u\in L^q
(\mathbb{R},\mathbb{R}^m) \mid 
\text{$\supp(u)\subseteq \mathbb{R}_-$ is compact}
\},
\end{equation*}
with $\supp(u)$ denoting the support of $u$.

Among input-output systems, it is easier to study \textit{pseudo-rational} systems. This notion was first introduced by Y. Yamamoto \cite{YamamotoRealization} and a definition for more general systems than System~\eqref{syst_lin_avec_sortie} is given in \cite{YamamotoRealization}. We specify bellow the pseudo-rationality concept to fit the framework of difference delay systems with distributed delays.

\begin{definition}
\label{def_pseudo_rational}
    System~\eqref{syst_lin_avec_sortie} is said to be \textit{pseudo-rational of order zero} if
    there are two \seb{families of} $d\times d$ and $d \times m$ matrices with entries in $\mathcal{E}'(\mathbb{R}_-)$, noted $Q$ and $P$ respectively, such that
     \begin{enumerate}
        
\item \label{Item_def_pseudo_rational1}$Q$ has an inverse over $\mathcal{D}_+'(\mathbb{R})$ in a convolution sense, \textit{e.g.}, there exists a $d \times d$ matrix with entries in $\mathcal{D}_+'(\mathbb{R})$ such that $Q^{-1}*Q=Q*Q^{-1}=\delta_0 I_d$ and the order of the distribution $\det(Q^{-1})$ is zero;
\item \label{Item_def_pseudo_rational2} the output $y(\cdot)$ can be expressed in terms of $Q^{-1}$, $P$ and $u(\cdot)$ as follows
\begin{equation}
\label{representation_radon_measure3}
y(\cdot)=\pi \left(A*u\right)(\cdot),
\end{equation}
where $A:=Q^{-1}*P$ and $\pi \left(A*u\right)$ is the truncation on positive time of the function $A*u$.
        \end{enumerate}
\end{definition}

Let us introduce the two distributions candidate to prove the pseudo-rationality of System~\eqref{syst_lin_avec_sortie}:
\begin{equation}
\label{eq:defQ}
\begin{split}
Q & :=\delta_{-\Lambda_N} I_d- \sum_{j=1}^N \delta_{-\Lambda_N+\Lambda_j} A_j- \delta_{-\Lambda_N}*\tilde{g},\\
P & := B \delta_0,
\end{split}
\end{equation}
where $\tilde{g}$ is the extension of $g$ on $\mathbb{R}$ by zero on the set $(-\infty,0) \cup (\Lambda_N,+\infty)$.

\begin{theorem}
\label{theorem_pseudo_rational}
    System~\eqref{syst_lin_avec_sortie} is pseudo-rational of order zero.
\end{theorem}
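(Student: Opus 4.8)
The plan is to verify that the explicit pair $(Q,P)$ defined in~\eqref{eq:defQ} satisfies the two conditions of Definition~\ref{def_pseudo_rational}. First I would check the trivial points: $P=B\delta_0$ clearly has entries in $\mathcal{E}'(\mathbb{R}_-)$, and $Q$ has entries in $\mathcal{E}'(\mathbb{R}_-)$ as well, since each Dirac $\delta_{-\Lambda_N+\Lambda_j}$ is supported at a point of $[-\Lambda_N,0]$ (using $\Lambda_1<\dotsb<\Lambda_N$ and positivity of the delays), and $\delta_{-\Lambda_N}*\tilde g$ is the function $g(\cdot+\Lambda_N)$, supported in $[-\Lambda_N,0]$ and essentially bounded, hence a compactly supported distribution in $\mathbb{R}_-$.

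The core of the argument is Item~\ref{Item_def_pseudo_rational1}: that $Q$ is invertible in the convolution algebra $\mathcal{D}'_+(\mathbb{R})$ and that $\det(Q^{-1})$ has order zero. I would write $Q=\delta_{-\Lambda_N}*(I_d-R)$, where $R:=\sum_{j=1}^N\delta_{\Lambda_j}A_j+\tilde g$ is a $d\times d$ matrix of distributions supported in $(0,\Lambda_N]$, hence with support bounded away from $0$ on the left; more precisely $l(R)\ge\Lambda_1>0$. Since $\delta_{-\Lambda_N}$ is a unit (invertible with inverse $\delta_{\Lambda_N}$) in $\mathcal{D}'_+(\mathbb{R})$, it suffices to invert $I_d-R$. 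Because the supports of the iterated convolutions $R^{*k}$ are pushed to the right (contained in $[k\Lambda_1,+\infty)$), the Neumann-type series $\sum_{k\ge 0}R^{*k}$ is locally finite, hence converges in $\mathcal{D}'_+(\mathbb{R})$ to a well-defined inverse $(I_d-R)^{-1}$; then $Q^{-1}=(I_d-R)^{-1}*\delta_{\Lambda_N}$. For the order-zero claim on $\det(Q^{-1})$, I would argue that $\det(Q^{-1})=\det\bigl((I_d-R)^{-1}\bigr)*\delta_{d\Lambda_N}$ and that $\det\bigl((I_d-R)^{-1}\bigr)$ is the scalar inverse in $\mathcal{D}'_+(\mathbb{R})$ of $\det(I_d-R)=\delta_0-(\text{something supported in }[\Lambda_1,+\infty))$, again given by a locally finite Neumann series of measures and $L^\infty$ functions — no derivatives of Diracs appear anywhere — so the resulting distribution is a locally finite sum of bounded measures, which has order zero. (Alternatively one invokes the general fact, used by Yamamoto, that such difference operators have order-zero determinant; I would keep the self-contained computation.)

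Finally, for Item~\ref{Item_def_pseudo_rational2} I would identify the output. Extending $x$ by $0$ for $t<\inf\supp(u)$ as in~\eqref{syst_lin_avec_sortie} and recognizing that~\eqref{syst_lin_avec_sortie} reads $x=R*x+Bu$ in $\mathcal{D}'_+(\mathbb{R})$ (the convolution $\tilde g*x$ reproducing the distributed-delay integral, the $\delta_{\Lambda_j}*x$ the discrete delays, since all data have support bounded on the left and $u\in\Omega_q$), we get $(I_d-R)*x=Bu$, hence $x=(I_d-R)^{-1}*B\delta_0*u$. Then $y(t)=x(t-\Lambda_N)$ on $[0,+\infty)$ means $y=\pi(\delta_{-\Lambda_N}*x)=\pi\bigl((I_d-R)^{-1}*\delta_{-\Lambda_N}*P*u\bigr)=\pi(Q^{-1}*P*u)$, which is exactly~\eqref{representation_radon_measure3} with $A=Q^{-1}*P$. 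The one subtlety to handle carefully is the justification that the time-domain equation~\eqref{syst_lin_avec_sortie}, a priori an almost-everywhere identity for an $L^q_{\rm loc}$ function, is genuinely the stated convolution identity in $\mathcal{D}'_+(\mathbb{R})$; I expect this bookkeeping — matching the Lebesgue–Stieltjes/integral terms with convolutions and checking support conditions so that everything lives in the algebra $\mathcal{D}'_+(\mathbb{R})$ — to be the main (if mostly routine) obstacle, the genuinely delicate part being the order-zero statement for $\det(Q^{-1})$.
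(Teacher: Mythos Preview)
Your argument for Item~\ref{Item_def_pseudo_rational2} is essentially the paper's argument, and the overall strategy for Item~\ref{Item_def_pseudo_rational1} (factor out $\delta_{-\Lambda_N}$ and invert what remains by a Neumann series) is also the right one. However, there is a genuine gap in the invertibility step.

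You write $R=\sum_{j=1}^N\delta_{\Lambda_j}A_j+\tilde g$ and assert that $R$ is supported in $(0,\Lambda_N]$ with $l(R)\ge\Lambda_1>0$. This is false: by definition $\tilde g$ is the extension of $g\in L^\infty([0,\Lambda_N],\mathbb{R}^{d\times d})$ by zero, so $\operatorname{supp}\tilde g\subset[0,\Lambda_N]$ and in general $l(\tilde g)=0$. Hence $l(R)=0$, the supports of $R^{*k}$ all contain a neighbourhood of~$0$ (or at least accumulate there), and the series $\sum_{k\ge 0}R^{*k}$ is \emph{not} locally finite. Your claimed convergence in $\mathcal{D}'_+(\mathbb{R})$, and with it the order-zero conclusion (which you also base on local finiteness), therefore does not follow.

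This is exactly the obstacle the paper's proof is designed to overcome. The paper splits $\tilde g=\tilde g_1+\tilde g_2$ with $\operatorname{supp}\tilde g_1\subset[0,\Lambda_1-\epsilon]$, $\|\tilde g_1\|_1<1$, and $\operatorname{supp}\tilde g_2\subset[\Lambda_1-\epsilon,\Lambda_N]$, then factors
\[
Q=\delta_{-\Lambda_N}I_d*(I_d\delta_0-\tilde g_1)*(I_d\delta_0+G),\qquad G=(I_d\delta_0-\tilde g_1)^{-1}*\Bigl(-\sum_j\delta_{\Lambda_j}A_j-\tilde g_2\Bigr).
\]
The first nontrivial factor is inverted by a Neumann series converging in $L^1$ norm (since $\|\tilde g_1\|_1<1$), while for the second factor the Titchmarsh convolution theorem gives $l(G)>0$, so \emph{that} Neumann series is genuinely locally finite. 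Both inverses are then visibly of order zero. In short, the distributed-delay kernel reaching down to~$0$ is precisely what forces the extra decomposition; your simpler factorisation would work only if $g$ vanished on a neighbourhood of~$0$.
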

\begin{proof}
    We shall first prove that $Q$ has an inverse over $\mathcal{D}_+'(\mathbb{R})$. Assume in a first time that $\vertii{\tilde{g}}_1 \ge 1$. Let $\epsilon \in [0,\Lambda_1)$ such that  we can decompose $\tilde{g}$ as
    \begin{equation}
    \label{eq:th_pseudo_rat1}
   \tilde{g} =\tilde{g}_1+\tilde{g}_2,
     \end{equation}
    where $\tilde{g}_1,\tilde{g}_2 \in L^1(\mathbb{R},\mathbb{R}^{d \times d})$, $\vertii{\tilde{g}_1}_1<1$, $\mathrm{supp}\, \tilde{g}_1 \subseteq [0,\Lambda_1-\epsilon]$ and $\mathrm{supp}\, \tilde{g}_2 \subseteq [\Lambda_1-\epsilon,\Lambda_N]$. Recall that $\phantom{}^{*j}$ is the convolution product repeated $j \in \mathbb{N}^*$ times. Since 
    \begin{equation}
        \vertii{\sum_{j=1}^{+\infty} \tilde{g}_1^{ \ast j}}_1 \le \sum_{j=1}^{+\infty} \vertii{\tilde{g}_1}_1^j<+\infty,
        \end{equation}
  we have that $I_d \delta_0-\tilde{g}_1$ is invertible over $\mathcal{D}_+'(\mathbb{R})$ and its inverse is 
\begin{equation}
    (I_d \delta_0 -\tilde{g}_1)^{-1}=I_d \delta_0+\sum_{j=1}^{+\infty} \tilde{g}_1^{\ast j}.
    \end{equation}
We decompose $Q$ as
\begin{equation}
\label{eq:proof_pseudo}
    Q=I_d\delta_{-\Lambda_N}*(I_d\delta_0-\tilde{g}_1)*(I_d \delta_0+G),
\end{equation}
where $G=(I_d \delta_0-\tilde{g}_1)^{-1}*F_2$ and $F_2= -\sum_{j=1}^N \delta_{\Lambda_j} A_j-\tilde{g}_2$. By the Titchmarsh convolution theorem, for all $k,l \in \{1,\cdots,d\}$, we get that $G_{k,l}\equiv 0$ or $\mathrm{min\, supp}(G_{k,l})>0$ implying that the minimum of the support of the non-zeros elements of the matrices $\left(G^{\ast j}\right)_{j \in \mathbb{N}^*}$ tend to $+\infty$ when $j \to +\infty$. It yields
 that an inverse of $I_d \delta_0+G$ in $\mathcal{D}_+'(\mathbb{R})$ is given by 
 \begin{equation}
 (I_d\delta_0+G)^{-1}=I_d \delta_0+\sum_{j=1}^{+\infty}(-1)^j G^{\ast j}.
 \end{equation}
From Equation~\eqref{eq:proof_pseudo}, we get that $Q^{-1}=I_d \delta_{\Lambda_N}*(I_d \delta_0+G)^{-1}*(I_d \delta_0-\tilde{g}_1)^{-1}$ is an inverse of $Q$ over $\mathcal{D}_+'(\mathbb{R})$ and the order of the distribution $\det(Q^{-1})$ is zero. If $\vertii{\tilde{g}}_1< 1$, we can do the same reasoning by taking $\tilde{g}_1$ equal to $\tilde{g}$ and without considering a function $\tilde{g}_2$ in the decomposition of Equation~\eqref{eq:th_pseudo_rat1}. Thus Item~\ref{Item_def_pseudo_rational1} of Definition~\ref{def_pseudo_rational} is fulfilled.  

It remains to prove that Item~\ref{Item_def_pseudo_rational2} of Definition~\ref{def_pseudo_rational} hold true. Denoting by $\tilde{y}$ the natural extension of the output $y$ on $\mathbb{R}$, \textit{i.e.}, $\tilde{y}(t)=x(t-\Lambda_N)$ for $t \in \mathbb{R}$, Equation~\eqref{syst_lin_avec_sortie} implies that
\begin{equation}
\label{obtention_convolution1}
\left(Q*\tilde{y}\right)(t)=\left(P*u\right)(t),\quad t \in \mathbb{R}.
\end{equation}
We take the convolution product of Equation~\eqref{obtention_convolution1} on the left by $Q^{-1}$ and we obtain
\begin{equation}
\label{obtention_convolution2}
\tilde{y}(t)=\left(Q^{-1}*P*u\right)(t),\quad t \in \mathbb{R}.
\end{equation}
Applying the operator $\pi$ in Equation~\eqref{obtention_convolution2},
we have
\begin{equation}
\label{representation_radon_measure4}
y(\cdot)=\pi \left(A*u\right)(\cdot),\qquad \text{where } A:=Q^{-1}*P.
\end{equation}
    It achieves the proof of the theorem. 
\end{proof}

\begin{remark}
   On the one hand this result was stated vaguely in the introduction of the paper \cite{yamamoto1989reachability}. In particular, there was no proof about the existence of the inverse of $Q$. On the other hand, an existence of the inverse of $Q$ over the space of distributions with support bounded on the left is given in \cite{van2003equivalence} in the scalar case but they did not provide the order of $\det(Q^{-1})$. 
\end{remark}

We define the state space of System~\eqref{syst_lin_avec_sortie} in terms of the distribution $Q$ as
 \begin{equation}
 \label{def_XQ}
 X^{Q,\,q}:=\left\{y \in L^q_{\rm loc}\left(\mathbb{R}_+,\mathbb{R}^d\right)\suchthat \pi(Q*y)=0  \right\}.
 \end{equation}
Thus the system has an input $u$ belonging to $\Omega_q$ and an output $y$ in $ X^{Q,\,q}$. We remark that the set $X^{Q,\,q}$ can be easily identified with the space $L^q\left(\left[0,\Lambda_N\right],\mathbb{R}^d\right)$. In fact, $y \in  X^{Q,\,q}$ if and only if the restriction $y|_{[0,\Lambda_N]}$ is in $L^q([0,\Lambda_N],\mathbb{R}^d)$ and $y$ is the unique extension of $y|_{[0,\Lambda_N]}$ on the interval $[0,+\infty)$ satisfying the condition $\pi(Q*y)=0$.

We now characterize the controllability notions from Definition~\ref{def:Lq-cont} in terms of the above realization theory formalism.
 

\begin{proposition}
\label{def_reachability}
 System~\eqref{system_lin_formel2} is $L^q$ approximately controllable from the origin (respectively, in finite time $T>0$) if and only if for every $\psi \in  X^{Q,\,q}$ there exists a sequence of inputs $(u_n)_{n \in \mathbb{N}} \in (\Omega_q)^{\mathbb{N}}$ (respectively, with support in $[-T,0]$) such that its associated sequence of outputs $(y_n)_{n \in \mathbb{N}} \in \left(L^q_{\rm loc}\left(\mathbb{R}_+,\mathbb{R}^d\right) \right)^{\mathbb{N}}$ through System~\eqref{syst_lin_avec_sortie} satisfies
\[
y_n \underset{n \to +\infty}{\longrightarrow} \psi \quad \text{in} \quad L^q_{\rm loc}\left(\mathbb{R}_+,\mathbb{R}^d\right).
\]

\end{proposition}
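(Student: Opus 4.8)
The plan is to connect the two descriptions of controllability—the PDE-side formulation in Definition~\ref{def:Lq-cont} (via the operator $E(T)$ and the state space $L^q([-\Lambda_N,0],\mathbb{R}^d)$) and the input–output formulation of System~\eqref{syst_lin_avec_sortie} (via inputs in $\Omega_q$ and outputs in $X^{Q,\,q}$)—by matching the underlying state spaces and the reachable sets. First I would invoke Proposition~\ref{coro2} to reduce the $L^q$ approximate controllability in time $T>0$ to the $L^q$ approximate controllability \emph{from the origin} in time $T>0$; that way we only need to deal with trajectories starting at $\phi\equiv 0$, for which the solution is $x_t=E(t)u$ by the representation formula~\eqref{eq:representation}. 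The target space $L^q([-\Lambda_N,0],\mathbb{R}^d)$ then has to be identified with $X^{Q,\,q}$, which the paragraph after~\eqref{def_XQ} already tells us is canonically isomorphic to $L^q([0,\Lambda_N],\mathbb{R}^d)$ via restriction and unique extension; I would make that identification explicit as a linear homeomorphism $\iota$.

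Next I would show that, under this identification, steering $\phi\equiv 0$ with a control $u\in L^q([0,T],\mathbb{R}^m)$ to the state $x_T=E(T)u$ corresponds exactly to feeding System~\eqref{syst_lin_avec_sortie} the time-reversed/shifted input $\check u\in\Omega_q$ (supported in $[-T,0]$) and reading off the output $y$. Concretely, the input–output map of~\eqref{syst_lin_avec_sortie} is $y=\pi(A*u)$ with $A=Q^{-1}*P$ by Theorem~\ref{theorem_pseudo_rational}, and one checks by a direct computation with the fundamental solution $X$ and the definition~\eqref{def_Et} of $E(T)$ that the piece of $y$ living on $[0,\Lambda_N]$ is precisely $\iota(x_T)$ — this is essentially the observation that $\tilde g$, the $A_j$, and the Dirac at $-\Lambda_N$ appearing in $Q$ are the same data that define $X$ and hence $E(T)$, combined with $\tilde y(t)=x(t-\Lambda_N)$. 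I would also note that $y_n\to\psi$ in $L^q_{\rm loc}(\mathbb{R}_+,\mathbb{R}^d)$ is equivalent to convergence of the restrictions $y_n|_{[0,\Lambda_N]}\to\psi|_{[0,\Lambda_N]}$ in $L^q([0,\Lambda_N],\mathbb{R}^d)$, because any element of $X^{Q,\,q}$ is determined on all of $\mathbb{R}_+$ by its restriction to $[0,\Lambda_N]$ through $\pi(Q*y)=0$, and the extension map is continuous — so convergence of outputs in the Fr\'echet topology amounts to convergence of the corresponding states in $L^q([-\Lambda_N,0],\mathbb{R}^d)$.

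With these two dictionaries in hand, the equivalence is a formal rewriting: "for every $\psi$ there is a control with $\|E(T)u-\psi\|<\epsilon$" becomes "for every $\psi\in X^{Q,\,q}$ there is a sequence of inputs $u_n\in\Omega_q$ with support in $[-T,0]$ whose outputs $y_n$ converge to $\psi$ in $L^q_{\rm loc}(\mathbb{R}_+,\mathbb{R}^d)$", and the "from the origin" (no time constraint) case is handled identically by dropping the support restriction. The main obstacle I anticipate is the bookkeeping in the second step: carefully verifying that the operator $E(T)$ defined through the Lebesgue–Stieltjes integral~\eqref{def_Et} against $d_\alpha X$ coincides, after the identification $\iota$ and the time-reversal of the input, with the truncated convolution $\pi(A*u)$ restricted to $[0,\Lambda_N]$. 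This requires relating the fundamental solution $X$ (a function of bounded variation solving~\eqref{solution_fondamentale}) to $Q^{-1}$ — morally $X$ is the primitive of $Q^{-1}$ suitably shifted — and checking the support/truncation conventions match; once that identity is established, the rest is routine.
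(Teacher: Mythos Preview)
The paper gives no proof of this proposition; it is stated as a direct reformulation (the remark that follows even says the right-hand side is just the realization-theory name for the left-hand side). So there is no ``paper's proof'' to compare against beyond the implicit claim that the two descriptions coincide by a change of variables.

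Your plan is sound and would go through, but it is heavier than necessary in two places. First, invoking Proposition~\ref{coro2} at the outset is superfluous: the statement already concerns controllability \emph{from the origin} (the parenthetical ``in finite time $T>0$'' modifies ``from the origin'', i.e.\ it refers to item~3) of Definition~\ref{def:Lq-cont}), so there is nothing to reduce. Second, and more importantly, the step you flag as the ``main obstacle'' --- matching $E(T)u$ with $\pi(A*u)|_{[0,\Lambda_N]}$ by relating the fundamental solution $X$ to $Q^{-1}$ through the Lebesgue--Stieltjes integral --- is avoidable. The system is time-invariant, so the identification is a one-line time shift: given $u\in L^q([0,T],\mathbb{R}^m)$ with $\phi\equiv 0$, set $\check u(s)=u(s+T)$ on $[-T,0]$; by uniqueness the trajectory $\tilde x$ of System~\eqref{syst_lin_avec_sortie} with input $\check u$ satisfies $\tilde x(s)=x(s+T)$, hence $y(t)=\tilde x(t-\Lambda_N)=x_T(t-\Lambda_N)$ for $t\in[0,\Lambda_N]$. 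This bypasses the fundamental solution entirely. Your observation that convergence in $L^q_{\rm loc}(\mathbb{R}_+,\mathbb{R}^d)$ of elements of $X^{Q,q}$ is equivalent to $L^q$ convergence of their restrictions to $[0,\Lambda_N]$ (by continuity of the solution operator $U_q$) is exactly what closes the argument, and that part is fine.
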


\begin{remark}
\label{remark_reachability}
    In the realization theory, the $L^q$ approximate controllability is called the \textit{$X^{Q,q}$ quasi-reachability}.
\end{remark}

We prove in the next section that the $L^q$ approximate controllability from the origin is equivalent to the $L^q$ approximate controllability from the origin in finite time $T>2d\Lambda_N$.

\section{Approximate left-coprimness condition for $L^q$ approximate controllability}

\label{sec:coprim}

We start this section by expounding a formula constructing a family of controls, with the help of particular distributions,  approximating each smooth targets in $X^{Q,q}$ and we provide a characterization of the $L^q$ approximate controllability of System~\eqref{system_lin_formel2} in terms of a left-coprimness condition. These results can be obtained by minor changes of the proof of Theorem~4.4 in \cite{YamamotoRealization}.

\begin{lemma}[Y. Yamamoto]
\label{lemma_propY}
    Assume the existence of two \seb{families of} $d\times d$ and $m \times d$ matrices $(R_n)_{n \in \mathbb{N}}$ and $(S_n)_{n \in \mathbb{N}}$ respectively, with entries in $\mathcal{E}'(\mathbb{R}_-)$, such that
    \begin{equation}
    \label{eq:app_cont0bisbis}
        Q*R_n+P*S_n
        \mathrel{\mathop{\longrightarrow}^{}_{n \rightarrow + \infty}}
        I_d \delta_0
    \end{equation}
    in a distributional sense. Then, for every target output $ \psi \in X^{Q,q}$ with $C^{\infty}$ entries and every $\epsilon>0$, denoting by $\tilde{\psi}$ a $C^{\infty}$ extension of $\psi$ on $\mathbb{R}$ such that $\tilde{\psi} \equiv 0$ on $(-\infty,-\epsilon)$ , we can define the family of inputs $(\omega_n)_{n \in \mathbb{N}} \in \Omega_q$ given by
\begin{equation}\label{eq:motion-planning}
\omega_n = S_n * Q * \tilde{\psi},\quad n \in \mathbb{N},
\end{equation} such that its associated sequence of outputs $(y_n)_{n \in \mathbb{N}} \in \left(L^q_{\rm loc}\left(\mathbb{R}_+,\mathbb{R}^d\right) \right)^{\mathbb{N}}$ through System~\eqref{syst_lin_avec_sortie} satisfies
 \[
 y_n \underset{n \to +\infty}{\longrightarrow} \psi \quad \text{in} \quad L^q_{\rm loc}\left(\mathbb{R}_+,\mathbb{R}^d\right).
 \]
\end{lemma}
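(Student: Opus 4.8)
The plan is to verify directly that the family $\omega_n = S_n * Q * \tilde\psi$ produces outputs converging to $\psi$ in $L^q_{\rm loc}(\mathbb{R}_+,\mathbb{R}^d)$, by plugging these inputs into the input--output relation from Theorem~\ref{theorem_pseudo_rational} and exploiting the coprimeness hypothesis \eqref{eq:app_cont0bisbis}.

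First I would check that $\omega_n$ is an admissible input, i.e. $\omega_n \in \Omega_q$. Since $S_n$ has entries in $\mathcal{E}'(\mathbb{R}_-)$, $Q$ has entries in $\mathcal{E}'(\mathbb{R}_-)$ (with support in $[-\Lambda_N,0]$), and $\tilde\psi$ is $C^\infty$ with $\tilde\psi \equiv 0$ on $(-\infty,-\epsilon)$ while being supported on $[-\epsilon,+\infty)$, the convolution $S_n * Q * \tilde\psi$ is a $C^\infty$ function whose support lies in $\supp(S_n) + \supp(Q) + \supp(\tilde\psi) \subseteq \mathbb{R}_- + \mathbb{R}_- + [-\epsilon,+\infty)$. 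The key point is that truncation by $\pi$ will only see the part of this convolution on $\mathbb{R}_+$; I should note that $Q * \tilde\psi$ restricted to $\mathbb{R}_-$ is what feeds into the system as a compactly-supported negative-time input after convolving with $S_n$, so $\pi(\omega_n)$-support considerations show $\omega_n$ has compact support in $\mathbb{R}_-$ (possibly after discarding an irrelevant positive-time tail, using that only $\supp(u)\subseteq\mathbb{R}_-$ matters — here one may need the remark that $\tilde\psi$ can be chosen so the relevant convolution is supported in $\mathbb{R}_-$, or invoke that the system \eqref{syst_lin_avec_sortie} only depends on $u|_{\mathbb{R}_-}$). This bookkeeping on supports is the first technical point to get right.

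Next, the core computation: by Theorem~\ref{theorem_pseudo_rational}, the output associated to input $\omega_n$ is $y_n = \pi(A * \omega_n) = \pi(Q^{-1} * P * S_n * Q * \tilde\psi)$. Using commutativity of convolution in $\mathcal{D}'_+(\mathbb{R})$ and the identity $Q^{-1}*Q = \delta_0 I_d$, I would rewrite
\[
Q^{-1} * P * S_n * Q = Q^{-1} * (P*S_n) * Q = Q^{-1} * \bigl(I_d\delta_0 - Q*R_n + (Q*R_n + P*S_n)\bigr) * Q,
\]
so that $Q^{-1}*P*S_n*Q = \delta_0 I_d - R_n * Q + Q^{-1}*(Q*R_n + P*S_n)*Q$. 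Wait — I should instead isolate it as $Q^{-1}*(Q*R_n+P*S_n)*Q - Q^{-1}*Q*R_n*Q = Q^{-1}*(Q*R_n+P*S_n)*Q - R_n*Q$; then
\[
y_n = \pi\bigl(\tilde\psi\bigr) - \pi\bigl(R_n * Q * \tilde\psi\bigr) + \pi\bigl(Q^{-1}*(Q*R_n+P*S_n)*Q*\tilde\psi\bigr).
\]
Actually the cleanest route: write $P*S_n = (Q*R_n + P*S_n) - Q*R_n$, hence $Q^{-1}*P*S_n = Q^{-1}*(Q*R_n+P*S_n) - R_n$, and therefore
\[
y_n = \pi\bigl(Q^{-1}*(Q*R_n+P*S_n)*Q*\tilde\psi\bigr) - \pi\bigl(R_n*Q*\tilde\psi\bigr).
\]
Since $\pi(Q*\tilde\psi) = \pi(Q*\psi) = 0$ because $\psi \in X^{Q,q}$ and $\tilde\psi$ agrees with $\psi$ on $\mathbb{R}_+$ (here one uses that $\supp(R_n)\subseteq\mathbb{R}_-$ so $R_n * (\text{something vanishing on }\mathbb{R}_+)$ still vanishes on $\mathbb{R}_+$ — the causality/support argument), the second term drops out. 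One is left with $y_n = \pi\bigl(Q^{-1}*(Q*R_n+P*S_n)*Q*\tilde\psi\bigr)$, and the target is $\psi = \pi(\tilde\psi) = \pi(Q^{-1}*Q*\tilde\psi) = \pi(Q^{-1}*\delta_0 I_d * Q * \tilde\psi)$ — but this requires inserting $Q$ and $Q^{-1}$ around $\delta_0$, which works since $Q^{-1}*\delta_0 I_d*Q = Q^{-1}*Q = \delta_0 I_d$. Thus $y_n - \psi = \pi\bigl(Q^{-1} * [(Q*R_n+P*S_n) - \delta_0 I_d] * Q * \tilde\psi\bigr)$.

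Finally, the convergence. Hypothesis \eqref{eq:app_cont0bisbis} gives $(Q*R_n+P*S_n) - \delta_0 I_d \to 0$ distributionally. I would convolve on the right with the fixed distribution $Q*\tilde\psi$ (compactly supported, as $Q$ has compact support and $\tilde\psi$ is $C^\infty$) and on the left with the fixed $Q^{-1} \in \mathcal{D}'_+(\mathbb{R})$; convolution by a fixed element of $\mathcal{D}'_+(\mathbb{R})$ (or by a compactly-supported distribution) is continuous for the distributional topology when supports stay suitably bounded, so the whole expression tends to $0$ in $\mathcal{D}'(\mathbb{R})$. The remaining — and I expect principal — obstacle is upgrading distributional convergence to convergence in $L^q_{\rm loc}(\mathbb{R}_+,\mathbb{R}^d)$: one must argue that the $y_n$ actually lie in $L^q_{\rm loc}$ with locally bounded $L^q$ norms and that the limit is genuinely in $L^q_{\rm loc}$, then use that on a bounded interval a distributionally convergent sequence which is bounded in $L^q$ (with $L^q$ reflexive, $q\in(1,\infty)$, or via a density/uniform-integrability argument for $q=1$) converges in the appropriate sense; here the structure of $Q^{-1}$ — order zero for $\det(Q^{-1})$, from Theorem~\ref{theorem_pseudo_rational} — is what guarantees $Q^{-1}*(\text{measure-like data})$ stays in $L^q_{\rm loc}$ rather than producing genuine distributions. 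I would lean on the fact that this is precisely the mechanism in the proof of Theorem~4.4 of \cite{YamamotoRealization}, adapting the estimates to the present $L^q$ setting and to the explicit $Q$, $P$ of \eqref{eq:defQ}, and invoking Proposition~\ref{def_reachability} to conclude.
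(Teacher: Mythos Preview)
The paper itself does not prove this lemma: it is stated with attribution to Yamamoto and justified only by the sentence ``These results can be obtained by minor changes of the proof of Theorem~4.4 in \cite{YamamotoRealization}'', together with a later remark that a density argument carries the $q=2$ case to general $q\in[1,+\infty)$. Your sketch is essentially a faithful reconstruction of Yamamoto's argument, and in that sense does more than the paper.

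One clarification would tighten the write-up considerably. In your step~4 you correctly observe $\pi(Q*\tilde\psi)=0$, but you do not then exploit the consequence: since $Q*\tilde\psi$ is $C^\infty$, vanishes on $(0,+\infty)$, and has support bounded below by $-\Lambda_N-\epsilon$, it is a genuine test function in $\mathcal{D}(\mathbb{R})^d$ with $\supp(Q*\tilde\psi)\subseteq[-\Lambda_N-\epsilon,0]$. Your parenthetical justification in step~6 (``$Q$ has compact support and $\tilde\psi$ is $C^\infty$'') is not sufficient on its own, because $\tilde\psi$ is \emph{not} compactly supported; it is the vanishing on $\mathbb{R}_+$ that closes the support on the right. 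Once this is noted, step~1 becomes immediate --- $\omega_n=S_n*(Q*\tilde\psi)$ is smooth with compact support in $\supp(S_n)+[-\Lambda_N-\epsilon,0]\subseteq\mathbb{R}_-$, so there is no ``positive-time tail'' to discard --- and step~6 simplifies, since one is convolving the distributionally-null sequence against a fixed test function before applying $Q^{-1}$. Your instinct to defer the remaining $L^q_{\rm loc}$ estimate to the machinery of \cite{YamamotoRealization} is exactly what the paper itself does.
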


\begin{proposition}[Y. Yamamoto]
\label{prop_Y}
   System~\eqref{system_lin_formel2} is $L^q$ approximately controllable from the origin if and only if there exist two \seb{families of} $d\times d$ and $m \times d$ matrices $(R_n)_{n \in \mathbb{N}}$ and $(S_n)_{n \in \mathbb{N}}$ respectively, with entries in $\mathcal{E}'(\mathbb{R}_-)$, such that
    \begin{equation}
    \label{eq:app_cont0}
        Q*R_n+P*S_n
        \mathrel{\mathop{\longrightarrow}^{}_{n \rightarrow + \infty}}
        I_d \delta_0
    \end{equation}
    in a distributional sense.
\end{proposition}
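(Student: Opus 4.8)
The plan is to prove both implications, with the forward direction (approximate controllability $\Rightarrow$ coprimeness) being the substantive one and the converse being an immediate consequence of Lemma \ref{lemma_propY}. For the converse: assuming the existence of families $(R_n)$ and $(S_n)$ in $\mathcal{E}'(\mathbb{R}_-)$ with $Q*R_n+P*S_n \to I_d\delta_0$ distributionally, Lemma \ref{lemma_propY} produces, for each target $\psi \in X^{Q,q}$ with $C^\infty$ entries, a sequence of inputs $\omega_n = S_n*Q*\tilde\psi \in \Omega_q$ whose outputs converge to $\psi$ in $L^q_{\rm loc}(\mathbb{R}_+,\mathbb{R}^d)$. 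Since the $C^\infty$ targets are dense in $X^{Q,q}$ (identified with $L^q([0,\Lambda_N],\mathbb{R}^d)$) and the output map $u \mapsto y$ is continuous from $\Omega_q$ into $L^q_{\rm loc}(\mathbb{R}_+,\mathbb{R}^d)$ by Theorem \ref{theorem_pseudo_rational} (since $y = \pi(A*u)$ with $A = Q^{-1}*P$), a standard diagonal argument upgrades convergence on a dense set to convergence on all of $X^{Q,q}$; by Proposition \ref{def_reachability} this is exactly $L^q$ approximate controllability from the origin.

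For the forward direction, I would follow the argument of Theorem 4.4 in \cite{YamamotoRealization} adapted to our pseudo-rational-of-order-zero setting. Assume System \eqref{system_lin_formel2} is $L^q$ approximately controllable from the origin. By Proposition \ref{def_reachability}, the reachable outputs $\{\pi(A*u) : u \in \Omega_q\}$ are dense in $X^{Q,q}$. The key is to exploit the module structure: $X^{Q,q}$ is a module over the convolution algebra of distributions in $\mathcal{E}'(\mathbb{R}_-)$ (acting by convolution followed by truncation $\pi$), and $X^{Q,q}$ is cyclic — it is generated by a single element, essentially $\pi(Q^{-1}$ applied to a suitable Dirac-type generator), or more concretely one shows $X^{Q,q}$ is generated as such a module by the columns of $\pi(Q^{-1})$. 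Density of the reachable set then means the submodule generated by the columns of $A = Q^{-1}*P$ is dense in the submodule generated by the columns of $Q^{-1}$. Translating the relation "columns of $Q^{-1}*P$ approximately generate columns of $Q^{-1}$" back through convolution by $Q$ on the left: one seeks $S_n$ with $\pi(Q*Q^{-1}*P*S_n) = \pi(P*S_n)$ approximating $\pi(Q*Q^{-1}) = \pi(\delta_0 I_d)$, up to a correction term supported on $\mathbb{R}_+$ that can be absorbed as $Q*R_n$ for suitable $R_n \in \mathcal{E}'(\mathbb{R}_-)$. This yields $Q*R_n + P*S_n \to I_d\delta_0$ distributionally.

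The main obstacle is making the last translation rigorous: passing from an approximation statement in $X^{Q,q}$ (convergence in $L^q_{\rm loc}$, i.e. after truncation $\pi$) to a genuine distributional convergence $Q*R_n+P*S_n \to I_d\delta_0$ on all of $\mathbb{R}$. The difficulty is precisely that $\pi$ discards the negative-time part, so one must reconstruct $R_n$ to account for the discarded piece; this is where pseudo-rationality \emph{of order zero} is essential — the condition that $\det(Q^{-1})$ has order zero guarantees that $Q^{-1}$ maps $L^q$ functions to $L^q$ functions (no loss of regularity), so that the correction distributions stay in the right class and the convolution identities can be inverted as needed. I would handle this by working column by column, using the identification $X^{Q,q} \cong L^q([0,\Lambda_N],\mathbb{R}^d)$ to control supports, and invoking the continuity of convolution by the fixed compactly supported distributions $Q$ and $Q^{-1}$ (on compact time windows) to transfer the limit. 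Once the correction term is identified and shown to have support in $\mathbb{R}_-$, the convergence \eqref{eq:app_cont0} follows; the rest is routine bookkeeping with supports and the algebra structure of $\mathcal{D}'_+(\mathbb{R})$.
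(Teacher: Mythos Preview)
The paper does not prove Proposition~\ref{prop_Y} at all: it is stated as a result due to Yamamoto, with the text before Lemma~\ref{lemma_propY} saying that it ``can be obtained by minor changes of the proof of Theorem~4.4 in \cite{YamamotoRealization}'', and the remark after it adding that the case $q=2$ is \cite[Theorem~4.1]{YamamotoRealization} while general $q\in[1,+\infty)$ follows by density and the inclusions among the spaces $L^q_{\rm loc}$. So there is nothing substantive to compare against on the paper's side; your proposal is doing more than the paper does, namely sketching Yamamoto's argument itself.

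Your converse direction is exactly what the paper's remark points to: Lemma~\ref{lemma_propY} gives the result for smooth targets, and density of smooth targets in $X^{Q,q}\cong L^q([0,\Lambda_N],\mathbb{R}^d)$ does the rest. That part is fine.

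For the forward direction your outline is in the right spirit (module structure over $\mathcal{E}'(\mathbb{R}_-)$, cyclicity via the columns of $Q^{-1}$, then convolving back by $Q$), but there is a concrete confusion in the support bookkeeping. You write that the correction term is ``supported on $\mathbb{R}_+$'' and absorbed as $Q*R_n$; but both $Q$ and $R_n$ live in $\mathcal{E}'(\mathbb{R}_-)$, so $Q*R_n$ is supported in $\mathbb{R}_-$, not $\mathbb{R}_+$. The actual mechanism in Yamamoto's proof runs the other way: one starts from approximate reachability of suitable generators of $X^{Q,q}$, obtains inputs $S_n$ with $\pi(Q^{-1}*P*S_n)$ close to those generators in $L^q_{\rm loc}$, and then uses that anything in $X^{Q,q}$ killed by $\pi\circ(Q*\cdot)$ can be written as $\pi$ of something in the image of $Q$; the distribution $R_n$ is produced from the \emph{negative-time} tail of $Q^{-1}*P*S_n$ minus the target, not from a positive-time correction. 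Your sentence ``the rest is routine bookkeeping'' hides precisely the step that requires care --- showing that the resulting $R_n$ has compact support in $\mathbb{R}_-$ and that the convergence upgrades from $L^q_{\rm loc}(\mathbb{R}_+)$ to $\mathcal{D}'(\mathbb{R})$. This is where order zero of $\det(Q^{-1})$ is genuinely used, as you note, but the argument is not quite the one you wrote. Since the paper simply cites Yamamoto here, the cleanest fix is to do the same and reserve the detailed module argument for a separate note if desired.
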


\begin{remark}
   The existence of the two \seb{families of} $d\times d$ and $m \times d$ matrices $(R_n)_{n \in \mathbb{N}}$ and $(S_n)_{n \in \mathbb{N}}$ with entries in $\mathcal{E}'(\mathbb{R}_-)$ satisfying Equation~\eqref{eq:app_cont0} is called the \textit{approximate left--coprimeness} condition in the realization theory formulation. 
\end{remark}

\begin{remark}
   Lemma~\ref{lemma_propY} and Proposition~\ref{prop_Y} are proved in \cite[Theorem 4.1]{YamamotoRealization} for $q=2$ but an easy density argument and the inclusion of the $\left(L^q_{\rm loc}\left(\mathbb{R}_+,\mathbb{R}^d\right)\right)_{q \in [1,+\infty)}$ show that the results hold true as well for $q\in [1,+\infty)$.
\end{remark}

\begin{remark}
\label{remark_con_bis}
    Proposition~\ref{prop_Y} implies that, if System~\eqref{system_lin_formel2} is $L^{q'}$ approximately controllable \seb{from the origin} some $q' \in [1,+\infty)$, then it is $L^{q}$ approximately controllable \seb{from the origin} for all $q \in [1,+\infty)$.
\end{remark}

The theory of Y. Yamamoto does not consider the controllability in finite time. It is our next aim to prove that the $L^q$ approximate controllability from the origin is tantamount to the $L^q$ approximate controllability in finite time and to provide an upper bound on the time of approximate controllability. To conclude this section, we consider the following consequence of Lemma~\ref{lemma_propY} which will be a key element to study the controllability in finite time.

\begin{corollary}
\label{coro_rep}
Let $T>0$. If there exist two family of matrices $(\widetilde{R}_n)_{n \in \mathbb{N}},(\widetilde{S}_n)_{n \in \mathbb{N}}$, with entries in $\mathcal{E}'(\mathbb{R}_-)$, so that the support of the elements of $(\widetilde{S}_n)_{n \in \mathbb{N}}$ belong to $[-T,0]$ and 
 \begin{equation}
    \label{eq:app_cont0bis}
        Q*\widetilde{R}_n+P*\widetilde{S}_n  \mathrel{\mathop{\longrightarrow}^{}_{n \rightarrow + \infty}} \delta_0 I_d\quad \mbox{in $\mathcal{D}'(\mathbb{R})$,}
    \end{equation}
    then System~\eqref{system_lin_formel2} is $L^q$ approximately controllable in time $\tilde{T}$ for all $\tilde{T}$ strictly greater than $T+\Lambda_N$.
\end{corollary}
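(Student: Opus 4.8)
The plan is to reduce the finite-time statement to Lemma~\ref{lemma_propY} by carefully tracking the supports of all the distributions involved in the motion-planning formula. Fix $\widetilde{T}>T+\Lambda_N$ and a target $\psi\in X^{Q,q}$; by Proposition~\ref{def_reachability} it suffices to construct, for each such $\psi$ and each $\epsilon>0$, a sequence of inputs supported in $[-\widetilde{T},0]$ whose outputs through System~\eqref{syst_lin_avec_sortie} converge to $\psi$ in $L^q_{\rm loc}(\mathbb{R}_+,\mathbb{R}^d)$. A standard density argument (as in the final remark before the corollary, together with the density of $C^\infty$ elements of $X^{Q,q}$ in $X^{Q,q}$, using continuity of the input-to-output map) lets us assume $\psi$ has $C^\infty$ entries. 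Then apply Lemma~\ref{lemma_propY} with the families $(\widetilde{R}_n)$, $(\widetilde{S}_n)$, which satisfy \eqref{eq:app_cont0bis}: it produces inputs $\omega_n=\widetilde{S}_n*Q*\tilde{\psi}$, where $\tilde{\psi}$ is a $C^\infty$ extension of $\psi$ with $\tilde{\psi}\equiv 0$ on $(-\infty,-\epsilon)$, and the associated outputs converge to $\psi$.

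First I would pin down the support of $\omega_n$. We have $\supp(\widetilde{S}_n)\subseteq[-T,0]$ by hypothesis, $\supp(Q)\subseteq[-\Lambda_N,0]$ directly from the definition \eqref{eq:defQ} of $Q$ (its atoms sit at $-\Lambda_N$ and $-\Lambda_N+\Lambda_j\le 0$, and $\delta_{-\Lambda_N}*\tilde{g}$ is supported in $[-\Lambda_N,0]$), and $\supp(\tilde{\psi})\subseteq[-\epsilon,+\infty)$. Since $\supp(\alpha*\beta)\subseteq\supp(\alpha)+\supp(\beta)$, we get $\supp(\omega_n)\subseteq[-T-\Lambda_N-\epsilon,+\infty)$; moreover $\omega_n$, being an input in $\Omega_q$ as guaranteed by Lemma~\ref{lemma_propY}, has support in $\mathbb{R}_-$, so in fact $\supp(\omega_n)\subseteq[-T-\Lambda_N-\epsilon,0]$. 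Choosing $\epsilon>0$ small enough that $T+\Lambda_N+\epsilon\le\widetilde{T}$ — which is possible precisely because $\widetilde{T}>T+\Lambda_N$ — yields $\supp(\omega_n)\subseteq[-\widetilde{T},0]$, which is exactly the support constraint required by the finite-time clause of Proposition~\ref{def_reachability}.

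It then remains to translate the convergence $y_n\to\psi$ in $L^q_{\rm loc}(\mathbb{R}_+,\mathbb{R}^d)$, obtained at the level of the input-output system~\eqref{syst_lin_avec_sortie}, back into the $L^q$ approximate controllability in time $\widetilde{T}$ of System~\eqref{system_lin_formel2}, and then to remove the smoothness assumption on $\psi$. The first is immediate from Proposition~\ref{def_reachability}; the second is the density argument already invoked. The main technical point — and the place where the bound $T+\Lambda_N$ genuinely comes from — is the support bookkeeping in the middle paragraph: one must use that the output is read off as $y(t)=x(t-\Lambda_N)$, i.e. the extra delay $\Lambda_N$ built into the realization~\eqref{syst_lin_avec_sortie}, to see that controlling the output over a time window forces the original state $x_{\widetilde{T}}$ to match the target, and that the convolution by $Q$ in the motion-planning formula~\eqref{eq:motion-planning} costs another $\Lambda_N$ of support; I expect reconciling these two occurrences of $\Lambda_N$ (only one of which appears in the final bound) with the role of the arbitrarily small $\epsilon$ to be the subtle part, everything else being routine.
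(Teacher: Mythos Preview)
Your proposal is correct and follows essentially the same route as the paper: apply Lemma~\ref{lemma_propY} with the given families, track that $\supp(\omega_n)\subseteq[-T-\Lambda_N-\epsilon,0]$ from $\supp(\widetilde{S}_n)\subseteq[-T,0]$, $\supp(Q)\subseteq[-\Lambda_N,0]$ and $\supp(\tilde\psi)\subseteq[-\epsilon,+\infty)$, then invoke Proposition~\ref{def_reachability} and density. Your closing worry about a second $\Lambda_N$ is unfounded: the output shift $y(t)=x(t-\Lambda_N)$ is already built into the equivalence of Proposition~\ref{def_reachability}, so the only $\Lambda_N$ appearing in the time bound is the one coming from $\supp(Q)$ in the motion-planning formula, and there is nothing left to reconcile.
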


\begin{proof}
    Under the assumption of the corollary, Lemma~\ref{lemma_propY} implies that, for all $\epsilon>0$, we can approximate each element of $ X^{Q,q}$ with $C^{\infty}$ entries with a family of controls whose supports belong to $[-T-\epsilon-\Lambda_N,0]$. By a density argument, the same property is true for each element of $ X^{Q,q}$ so that System~\eqref{system_lin_formel2} is $L^q$ approximately controllable in time $T+\epsilon+\Lambda_N$, thanks to Proposition~\ref{def_reachability}. It achieves the proof of the corollary.
\end{proof}

We are now ready to prove in the next section that an upper bound for the time minimal of controllability is $2d \Lambda_N$.

\section{Upper bound on the minimal time of controllability}

\label{sec:upper_bound}
 We start this section by citing a controllability lemma due to Kamen \cite[Lemma 6.1]{kamen1976module}. We introduce first some notions of quotient rings. Let $A$ an ideal of $\mathcal{E}'(\mathbb{R}_-)$. We note $\mathcal{E}'(\mathbb{R}_-)/A$ the quotient ring and $[\cdot]$ the equivalence class. 


\begin{lemma}[Kamen]
\label{lemma_Kamen}
Assume there exists a $\beta \in A$ having an inverse with respect to the convolution in $\mathcal{D}_+'(\mathbb{R})$, \textit{i.e.} there exists $\beta^{-1} \in \mathcal{D}_+'(\mathbb{R})$ such that $\beta * \beta^{-1}=\delta_0$. Then for any $\tau< l(\beta)$ and any $[\omega] \in \mathcal{E}'(\mathbb{R}_-) /A$, there exists an $\alpha \in \mathcal{E}'(\mathbb{R}_-)$ such that $[\alpha]=[\omega]$ and $l(\alpha)>\tau$.
\end{lemma}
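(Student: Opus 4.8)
The statement is a version of Kamen's Lemma~6.1, so the goal is to reconstruct the shifting argument: given $[\omega]\in\mathcal{E}'(\mathbb{R}_-)/A$ with representative $\omega$ and a target $\tau<l(\beta)$, I want to modify $\omega$ by an element of $A$ so that the support of the new representative lies strictly to the right of $\tau$. The natural candidate is $\alpha := \omega - \gamma*\beta$ for a suitable $\gamma\in\mathcal{D}'_+(\mathbb{R})$ chosen so that $\alpha\in\mathcal{E}'(\mathbb{R}_-)$, $[\alpha]=[\omega]$ (which holds as soon as $\gamma*\beta\in A$, e.g.\ because $\beta\in A$ and $A$ is an ideal, provided $\gamma*\beta$ has compact support in $\mathbb{R}_-$), and $l(\alpha)>\tau$.

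First I would make the reduction precise. Since $\omega\in\mathcal{E}'(\mathbb{R}_-)$ it has compact support, say $\supp\omega\subseteq[-M,0]$ for some $M>0$. The idea is to "peel off" from $\omega$ everything supported on $[-M,\tau]$ using the convolution inverse $\beta^{-1}$. Concretely, let $\chi$ be $\omega$ restricted (as a distribution) to a neighbourhood of $[-M,\tau]$ — more carefully, write $\omega = \omega_1 + \omega_2$ with $\supp\omega_1\subseteq[-M,\tau']$ and $\supp\omega_2\subseteq[\tau',0]$ for some $\tau<\tau'<l(\beta)$ (such a splitting exists because one can cut a compactly supported distribution along a hyperplane $t=\tau'$ up to the ambiguity of distributions supported at the single point $\tau'$, which are finite combinations of derivatives of $\delta_{\tau'}$ and can be absorbed into either piece). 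Then set $\gamma := \omega_1 * \beta^{-1}$, which lies in $\mathcal{D}'_+(\mathbb{R})$, and put $\alpha := \omega - \gamma*\beta = \omega - \omega_1 = \omega_2$.

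The key point to check is that this $\alpha$ does the job. By construction $\alpha = \omega_2$ has $l(\alpha)\ge\tau'>\tau$, so the support bound is immediate. For the ideal membership $[\alpha]=[\omega]$: we have $\omega-\alpha = \omega_1 = \gamma*\beta$, and I must verify that $\gamma*\beta\in A$; since $\beta\in A$ and $A$ is an ideal of $\mathcal{E}'(\mathbb{R}_-)$, it suffices to show $\gamma*\beta$ actually lies in $\mathcal{E}'(\mathbb{R}_-)$ — but $\gamma*\beta = \omega_1$ by definition and $\omega_1\in\mathcal{E}'(\mathbb{R}_-)$, so this is automatic (the detour through $\beta^{-1}$ is only used to exhibit $\omega_1$ as $\gamma*\beta$; the product being again compactly supported in $\mathbb{R}_-$ is what makes it an element of the ideal). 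One should also note $\gamma\in\mathcal{D}'_+(\mathbb{R})$ so that the convolution $\gamma*\beta$ is well defined in the algebra $\mathcal{D}'_+(\mathbb{R})$.

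\textbf{Main obstacle.} The delicate step is the splitting $\omega=\omega_1+\omega_2$ along $t=\tau'$: a general distribution in $\mathcal{E}'(\mathbb{R}_-)$ cannot be cut at a point in a canonical way, and one must argue that a splitting with $\supp\omega_1\subseteq[-M,\tau']$, $\supp\omega_2\subseteq[\tau',0]$ exists at all, controlling where the "singular at $\tau'$" part (linear combinations of $\delta_{\tau'}^{(k)}$) is placed. This is handled by a standard cutoff: pick $\varphi\in C^\infty(\mathbb{R})$ with $\varphi\equiv1$ on $(-\infty,\tau'']$ and $\varphi\equiv0$ on $[\tau',+\infty)$ for some $\tau''<\tau'$ still $<l(\beta)$, set $\omega_1:=\varphi\omega$ and $\omega_2:=(1-\varphi)\omega$; then $\supp\omega_1\subseteq[-M,\tau']$ and $\supp\omega_2\subseteq[\tau'',0]$, and choosing $\tau''>\tau$ gives $l(\alpha)=l(\omega_2)\ge\tau''>\tau$. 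The only thing to keep an eye on is that multiplication by $\varphi$ keeps everything compactly supported in $\mathbb{R}_-$, which is clear, and that $\tau''$ can be taken strictly between $\tau$ and $l(\beta)$, which is possible precisely because $\tau<l(\beta)$. With the splitting in hand the rest is the bookkeeping above.
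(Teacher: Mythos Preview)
There is a genuine gap in the ideal-membership step. You write $\omega_1=\gamma*\beta$ with $\gamma:=\omega_1*\beta^{-1}\in\mathcal{D}'_+(\mathbb{R})$ and then claim that ``since $\beta\in A$ and $A$ is an ideal of $\mathcal{E}'(\mathbb{R}_-)$, it suffices to show $\gamma*\beta\in\mathcal{E}'(\mathbb{R}_-)$''. That implication is false: the ideal property only gives $r*\beta\in A$ for $r\in\mathcal{E}'(\mathbb{R}_-)$, not for $r\in\mathcal{D}'_+(\mathbb{R})$. Knowing merely that the product $\gamma*\beta$ lands back in $\mathcal{E}'(\mathbb{R}_-)$ does not force it into $A$. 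A concrete counterexample: take $\beta=\delta_{-1}-\delta_0$, $A=\beta*\mathcal{E}'(\mathbb{R}_-)$, $\omega=\delta_{-5}$, $\tau=-2$. Your splitting gives $\omega_1=\delta_{-5}$, $\omega_2=0$, and $\gamma=\omega_1*\beta^{-1}=\sum_{k\ge -4}\delta_k$, which is \emph{not} compactly supported; and indeed $\delta_{-5}\notin A$ (any $\rho\in\mathcal{E}'(\mathbb{R}_-)$ with $\rho*\beta=\delta_{-5}$ would have to equal this infinite sum). So your conclusion $[\omega_2]=[\omega]$ fails here, even though the lemma itself holds (e.g.\ $\alpha=\delta_{-1}$ works).

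The fix --- and this is Kamen's actual argument --- is to apply the cutoff to $\gamma:=\omega*\beta^{-1}$ rather than to $\omega$. Choose $\tau<\tau'<l(\beta)$ and a smooth $\varphi$ equal to $1$ on $(-\infty,\tau'-l(\beta)]$ and $0$ near $+\infty$, set $\rho:=\varphi\,\gamma$; then $\rho\in\mathcal{E}'(\mathbb{R}_-)$ (its support is bounded on the left because $\gamma\in\mathcal{D}'_+(\mathbb{R})$ and on the right by the cutoff), so $\rho*\beta\in A$ by the ideal property, and $\alpha:=\omega-\rho*\beta=(\gamma-\rho)*\beta$ satisfies $l(\alpha)\ge l(\gamma-\rho)+l(\beta)>\tau'-l(\beta)+l(\beta)=\tau'>\tau$. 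In short, the shifting idea is right, but the truncation must happen \emph{after} convolving with $\beta^{-1}$, precisely so that the multiplier you use lies in the ring $\mathcal{E}'(\mathbb{R}_-)$ over which $A$ is an ideal.
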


If $Q$ and $P$ are scalar ($d=1$), combining Proposition~\ref{prop_Y}, Corollary~\ref{coro_rep} and Lemma~\ref{lemma_Kamen}, we get that the $L^q$ approximate controllability from the origin is equivalent to the $L^q$ approximate controllability from the origin in time $T> 2\Lambda_N$. We next prove that a similar result holds true when $Q$ and $P$ are not necessarily scalar.

\begin{theorem}
\label{theorem_temps_minimal}
    System~\eqref{system_lin_formel2} is $L^q$ approximately controllable from the origin if and only if it is $L^q$ approximately controllable from the origin in time $T$, for all $T>2d\Lambda_N$.
\end{theorem}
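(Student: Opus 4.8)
The plan is to establish the forward implication only, since the converse is trivial. So assume System~\eqref{system_lin_formel2} is $L^q$ approximately controllable from the origin. By Proposition~\ref{prop_Y}, this gives families $(R_n)_{n \in \mathbb{N}}$ and $(S_n)_{n \in \mathbb{N}}$ of $d\times d$ and $m\times d$ matrices with entries in $\mathcal{E}'(\mathbb{R}_-)$ such that $Q*R_n+P*S_n \to I_d\delta_0$ distributionally. In view of Corollary~\ref{coro_rep}, it suffices to modify these families so that the supports of the $S_n$ all lie in a fixed interval $[-T_0,0]$ with $T_0 \le (2d-1)\Lambda_N$ (so that $T_0+\Lambda_N \le 2d\Lambda_N$, and then any $\tilde T > 2d\Lambda_N$ works); the $R_n$ may move support but must stay in $\mathcal{E}'(\mathbb{R}_-)$. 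The natural tool is Kamen's Lemma~\ref{lemma_Kamen}, applied to shift supports of the $S_n$ toward zero; but Kamen's lemma is a scalar statement about quotient rings $\mathcal{E}'(\mathbb{R}_-)/A$, so the work is in reducing the matrix situation to the scalar one.

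First I would take determinants. Writing $Q$ for the matrix in~\eqref{eq:defQ}, note that $\det Q$ is a scalar element of $\mathcal{E}'(\mathbb{R}_-)$ which, by Theorem~\ref{theorem_pseudo_rational}, has a convolution inverse in $\mathcal{D}'_+(\mathbb{R})$ (namely $\det(Q^{-1})$), and whose support infimum is $l(\det Q) = -d\Lambda_N$: indeed the Dirac part of $Q$ is $\delta_{-\Lambda_N}I_d$, so the leading term of $\det Q$ is $\delta_{-d\Lambda_N}$ and everything else in $\det Q$ has support in $(-d\Lambda_N, 0]$. Next, multiply the coprimeness relation on the left by the adjugate matrix $\mathrm{adj}(Q) \in \mathcal{M}_{d,d}(\mathcal{E}'(\mathbb{R}_-))$: using $\mathrm{adj}(Q)*Q = (\det Q)\, I_d$ we obtain
\[
(\det Q)\, R_n + \mathrm{adj}(Q)*P*S_n \;\longrightarrow\; \mathrm{adj}(Q)
\]
distributionally. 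Now the point is that $\beta := \det Q$ lies in the ideal $A := (\det Q)\,\mathcal{E}'(\mathbb{R}_-)$ of $\mathcal{E}'(\mathbb{R}_-)$, it is convolution-invertible in $\mathcal{D}'_+(\mathbb{R})$, and $l(\beta) = -d\Lambda_N$. Apply Kamen's Lemma~\ref{lemma_Kamen} entrywise to each entry of $\mathrm{adj}(Q)*P*S_n$: for $\tau$ any number with $-d\Lambda_N < \tau < 0$ and each fixed $n$, we can replace $\mathrm{adj}(Q)*P*S_n$ by a matrix $M_n \in \mathcal{M}_{d,d}(\mathcal{E}'(\mathbb{R}_-))$ with the same class modulo $A$ and with $l(M_n) > \tau$, the correction being absorbed into the $(\det Q)R_n$ term. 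Since $\mathrm{adj}(Q)*P*S_n = M_n + (\det Q)*N_n$ for some $N_n \in \mathcal{M}_{d,d}(\mathcal{E}'(\mathbb{R}_-))$, we get new families $\widetilde R_n := R_n + N_n$ and the matrix $M_n$ with $(\det Q)\widetilde R_n + M_n \to \mathrm{adj}(Q)$. To return from $M_n$ to something of the form $P*S_n$, multiply the whole relation on the left by $Q$ and divide by $\det Q$ — more carefully, one convolves by $Q^{-1}$ on the left to recover the original form, tracking how supports are affected by the $\delta_{\Lambda_N}$-type terms in $Q^{-1}$; the net effect is to produce $\widetilde S_n$ with supports pushed into a bounded interval whose left endpoint one controls by the choice of $\tau$.

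The main obstacle — and the step I expect to need the most care — is the bookkeeping of supports when one convolves back by $Q^{-1}$ (equivalently, by $\mathrm{adj}(Q)/\det Q$) to pass from the determinant-scaled coprimeness relation to a genuine relation $Q*\widetilde R_n + P*\widetilde S_n \to I_d\delta_0$ with $\supp \widetilde S_n \subseteq [-T_0,0]$. One has to check that $\widetilde R_n$ stays supported in $\mathbb{R}_-$ (the inverse $Q^{-1}$ has support bounded on the left but shifts things to the right by multiples of $\Lambda_N$, so some positive-support pieces get reintroduced and must be re-truncated, with the truncated-off part re-absorbed into the $P*\widetilde S_n$ term via the coprimeness approximation), and that the convergence in $\mathcal{D}'(\mathbb{R})$ is preserved — the latter is automatic since convolution by a fixed distribution in $\mathcal{E}'$ is continuous for distributional convergence. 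The precise constant obtained should be $l(\det Q) = -d\Lambda_N$ for the $\widetilde S_n$ after one pass, then an extra $\Lambda_N$ from Corollary~\ref{coro_rep}, and possibly one more $\Lambda_N$ from the $Q^{-1}$-shift, yielding the bound $T > 2d\Lambda_N$ stated in the theorem. Finally, invoke Corollary~\ref{coro_rep} to conclude $L^q$ approximate controllability in time $\tilde T$ for every $\tilde T > 2d\Lambda_N$, and Proposition~\ref{coro2} (together with the equivalence in Definition~\ref{def:Lq-cont} established via Proposition~\ref{def_reachability}) to phrase it as controllability from the origin in time $T$.
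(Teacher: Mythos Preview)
Your plan has a genuine gap at exactly the step you yourself flag as ``the main obstacle''. After multiplying on the left by $\mathrm{adj}(Q)$ and applying Kamen's lemma entrywise, you hold
\[
(\det Q)*\widetilde R_n + M_n \;\longrightarrow\; \mathrm{adj}(Q),
\]
with $M_n$ of controlled support but \emph{no structural relation to $P$}. Your proposed fix---multiply by $Q$ and convolve by $(\det Q)^{-1}$---is circular: since $M_n = \mathrm{adj}(Q)*P*S_n - (\det Q)*N_n$, one computes
\[
(\det Q)^{-1}*Q*M_n \;=\; (\det Q)^{-1}*(\det Q)\,P*S_n - Q*N_n \;=\; P*S_n - Q*N_n,
\]
so the relation collapses back to $Q*R_n + P*S_n \to I_d\delta_0$ with the \emph{original} $S_n$ and no support gain. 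The freedom Kamen's lemma gives you modulo $(\det Q)$ is precisely quotiented out when you undo the $\mathrm{adj}(Q)$ multiplication. The vaguer suggestion (truncate positive-support pieces coming from $Q^{-1}$ and ``re-absorb via the coprimeness approximation'') would use the very conclusion you are trying to establish. Finally, your bookkeeping for the constant is off: $d\Lambda_N + \Lambda_N + \Lambda_N = (d+2)\Lambda_N$, not $2d\Lambda_N$.

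The paper's route is different and avoids this trap. It transposes, sets $G=[Q,P]^T$, and passes to the scalar relation $\det(F_n*G)\to\delta_0$; the Cauchy--Binet formula expands this as $\sum_{\kappa}\alpha^n_\kappa*\det(G_\kappa)\to\delta_0$ over the $d\times d$ minors of $G$. Kamen's lemma is then applied to the \emph{scalar coefficients} $\alpha^n_\kappa$ for $\kappa\neq\{1,\dots,d\}$ (using the invertible element $\det Q^T$, whose support sits in $[-d\Lambda_N,0]$), producing $\mu^n_\kappa$ supported in $[-\widetilde T,0]$ with $\widetilde T$ arbitrarily close to $d\Lambda_N$. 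The matrix relation is rebuilt by the cofactor formula $(H_n)_{i,j}=\sum_{\kappa\ni j}\mu^n_\kappa*(G_\kappa)^{j,i}$, which \emph{automatically} yields $H_n*G\to I_d\delta_0$ with $H_n=[\widetilde H_n,\overline H_n]$ already split along the $Q$/$P$ rows of $G$. For $j\in\{d+1,\dots,d+m\}$ every contributing $\kappa$ is $\neq\{1,\dots,d\}$, so only the support-controlled $\mu^n_\kappa$ enter, and the cofactor $(G_\kappa)^{j,i}$ has support in $[-(d-1)\Lambda_N,0]$; hence $\overline H_n$ is supported in $[-\widetilde T-(d-1)\Lambda_N,0]$, and Corollary~\ref{coro_rep} gives the bound $2d\Lambda_N$. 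The Cauchy--Binet/cofactor detour is what lets Kamen act at the scalar level while preserving the $P$-factor in the reconstructed relation---the structural ingredient your plan is missing.
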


\begin{proof}  It is trivially true that the $L^q$ approximate controllability from the origin in time $T$ for all $T>2d \Lambda_N$ implies the $L^q$ approximate controllability from the origin. Let us show the converse assertion. The proof amounts to reduce the approximate left--coprimeness problem to a scalar one and to use Lemma~\ref{lemma_Kamen}. Thanks to Corollary~\ref{coro_rep}, for all $T_c$ such that $T_c>(2d-1) \Lambda_N$, it is sufficient to prove the existence of two family of distribution matrices $(\widetilde{R}_n)_{n \in \mathbb{N}},(\widetilde{S}_n)_{n \in \mathbb{N}}$ with compact supports so that the support of the elements of $(\widetilde{S}_n)_{n \in \mathbb{N}}$ belong to $[-T_c,0]$ and 
 \begin{equation}
    \label{eq:app_cont0bisbi}
        Q*\widetilde{R}_n+P*\widetilde{S}_n  \mathrel{\mathop{\longrightarrow}^{}_{n \rightarrow + \infty}} \delta_0 I_d \quad \mbox{in $\mathcal{D}'(\mathbb{R})$}.
    \end{equation}

Since System~\eqref{system_lin_formel2} is $L^q$ approximately controllable from the origin, applying Proposition~\ref{prop_Y}, there exist 
    two sequences of distribution matrices $(S_n)_{n \in \mathbb{N}}$ and $(R_n)_{n \in \mathbb{N}}$ satisfying \eqref{eq:app_cont0}. Applying a transposition in Equation \eqref{eq:app_cont0}, we get
\begin{equation}
\label{eqmeuhmeuh}
        R_n^T*Q^T+S_n^T* P^T  \mathrel{\mathop{\longrightarrow}^{}_{n \rightarrow + \infty}} \delta_0 I_d \quad \mbox{in $\mathcal{D}'(\mathbb{R})$}.
    \end{equation}
Let us define $F_n=\begin{bmatrix}
R_n^T & S_n^T \end{bmatrix}$ and $G=\begin{bmatrix}
Q & P \end{bmatrix}^T $. Thus, by continuity of the determinant, we deduce from Equation~\eqref{eqmeuhmeuh} that
\begin{equation}
\label{eq:_temp_min1}
      \mathrm{det}\left(F_n*
      G \right)  \mathrel{\mathop{\longrightarrow}^{}_{n \rightarrow + \infty}} \delta_0 \quad \mbox{in $\mathcal{D}'(\mathbb{R})$}.
    \end{equation} 

 Let $\kappa \subset \{1,\cdots,m+d \}$ with cardinal of $\kappa$ equal to $d$. We denote by $ G_{\kappa}$
      the $d \times d$ matrix composed of the $\kappa$ rows of $ G$. In particular, the family $\left(\det(G_{\kappa})\right)_{\kappa,\mathrm{card}\, \kappa =d}$ represents the $d\times d$ minors of the matrix $G$. Applying the Cauchy--Binet formula in Equation~\eqref{eq:_temp_min1}, we get, for all $\kappa$ with $\mathrm{card}\, \kappa=d$, the existence of $\left(\alpha_{\kappa}^n\right)_{\kappa,\mathrm{card}\, \kappa=d} \in \mathcal{E}'(\mathbb{R}_-)$ such that

 \begin{equation}
\label{eq:_temp_min2}
 \sum_{\kappa,\, \mathrm{card}\, \kappa=d} \alpha^n_{\kappa}* \det \left(G_{\kappa} \right) \mathrel{\mathop{\longrightarrow}^{}_{n \rightarrow + \infty}} \delta_0 \quad \mbox{in $\mathcal{D}'(\mathbb{R})$}.
 \end{equation}
 Since $\begin{bmatrix}
           Q &
           P
         \end{bmatrix}$ is pseudo-rational of order zero, we have that $\det \left(G_{\{1,\cdots,d\}}\right)=\det(Q^T)$ is invertible over $\mathcal{D}_+'(\mathbb{R})$ and the support is included in $[-d \Lambda_N,0]$. Let $\widetilde{T}>d \Lambda_N$ to be fixed later and $A=\{\det \left(G_{\{1,\cdots,d\}}\right) * \phi \, |\, \phi \in \mathcal{E}'(\mathbb{R}_-)\}$ the ideal generated by $\det\left(G_{\{1,\cdots,d\}}\right)$ over $\mathcal{E}'(\mathbb{R}_-)$. From Lemma~\ref{lemma_Kamen}, for all $\kappa \neq \{1,...,d\}$, we get the existence of $\mu_{\kappa}^n$ and $\beta_{\kappa}^n$ with compact support in $[-\widetilde{T},0]$ and $\mathbb{R}_-$ respectively such that $\alpha_{\kappa}^n =\mu_{\kappa}^n+\det\left(G_{\{1,\cdots,d\}}\right) *\beta_{\kappa}^n$. Thus, we deduce from \eqref{eq:_temp_min2} that 
\begin{equation}
\label{eq:jesaisplus}
\sum_{\kappa,\, \mathrm{card}\, \kappa=d} \mu^n_{\kappa} * \det \left(G_{\kappa} \right)  \mathrel{\mathop{\longrightarrow}^{}_{n \rightarrow + \infty}} \delta_0 \quad \mbox{in $\mathcal{D}'(\mathbb{R})$},
\end{equation}
         where 
         \begin{equation}
         \mu_{\{1,\cdots,d\}}^n := \alpha_{\{1,\cdots,d\}}^n +\sum_{\substack{\kappa \neq \{1,\cdots,d\}, \\ \mathrm{card}\, \kappa=d}} \beta_{\kappa}^n* \det \left(G_{\kappa} \right).
            \end{equation}
       For $i \in \{1,\cdots,d\}$ and $j \in \{1,\cdots,d+m\}$,  we define the $d \times (d+m)$ matrix $H_n$ whose coefficients are given by
         \begin{equation}
         \label{eq:fin_preuve_supp}
             \left(H_n\right)_{i,j}=\sum_{\substack{\kappa,\, j \in \kappa, \\ \mathrm{card}\, \kappa=d}} \mu_{\kappa}^n *\left(G_{\kappa} \right)^{j,i},
         \end{equation}
where $\left(G_{\kappa} \right)^{j,i}$ denotes the cofactor of the matrix $G_{\kappa}$ associated with the element $G_{j,i}$. Recall that for a square $d \times d$ matrix $A$ with entries in $\mathcal{E}'(\mathbb{R}_-)$, we have $\left(\mathrm{com}\, A\right)^T*A=\det(A) I_d$, where $\mathrm{com}\, A$ is the comatrix of $A$. Then, for all $i,l \in \{1,\cdots,d\}$, we have that 
         \begin{equation}
         \label{eq:jesaisplus2}
             \sum_{1 \le j \le d+m} \left(H_n\right)_{i,j} *G_{j,l}= \sum_{\kappa,\, \mathrm{card}\, \kappa=d} \mu^n_{\kappa}* \left( \sum_{j,\, j\in \kappa} G_{\kappa}^{j,i}*G_{j,l}\right)= e_{i,l}\sum_{\kappa,\, \mathrm{card}\, \kappa=d} \mu_{\kappa}^n *\det(G_{\kappa}),
         \end{equation}
         where $e_{i,l}$ is equal to one if $i=l$ and zero otherwise. Using Equation~\eqref{eq:jesaisplus} in Equation~\eqref{eq:jesaisplus2}, we get that

         \begin{equation}
         \label{eq:jesaisplus2.9}
             H_n *G  \mathrel{\mathop{\longrightarrow}^{}_{n \rightarrow + \infty}} \delta_0 I_d \quad \mbox{in $\mathcal{D}'(\mathbb{R})$}.
         \end{equation}
We can write $H_n=\begin{bmatrix}
             \widetilde{H}_n & \overline{H}_n
         \end{bmatrix},$ where $\widetilde{H}_n$ and $\overline{H}_n$  are $d \times d$ and $d \times m$ matrices respectively. By taking the transposition in Equation~\eqref{eq:jesaisplus2.9}, we obtain
            \begin{equation}
         \label{eq:jesaisplus3}
             Q*\widetilde{H}_n^T+P* \overline{H}_n^T  \mathrel{\mathop{\longrightarrow}^{}_{n \rightarrow + \infty}} \delta_0 I_d \quad \mbox{in $\mathcal{D}'(\mathbb{R})$}.
         \end{equation}
 To achieve the proof, it remains to show that, for all $T_c> (2d-1)\Lambda_N$, the support of $\overline{H}_n^T$ can be chosen to belong to $[-T_c,0]$. Let $T_c> (2d-1)\Lambda_N$, for $i \in \{1,\cdots,d\}$ and $j \in \{d+1,\cdots,d+m\}$, we have that $ j\notin \{1,\cdots,d\}$. Thus, we have that the support of the $\mu_{\kappa}$ and $\left(G_{\kappa} \right)^{j,i}$ appearing in Equation~\eqref{eq:fin_preuve_supp} are included in $[-\widetilde{T},0]$ and $[-(d-1)\Lambda_N,0]$ respectively. It yields that the support of $\left(H_n\right)_{i,j}$ is included in $[-\widetilde{T}-(d\seb{-1)} \Lambda_N,0]$. Since the coefficients of the matrix $\overline{H}_n$ are the $\left(H_n\right)_{i,j}$ for all $i \in \{1,\cdots,d\}$ and $j \in \{d+1,\cdots,d+m\}$, we have that the supports of the elements of $\overline{H}_n^T$ are included in $[-\widetilde{T}-(d-1)\Lambda_N,0]$. Invoking Lemma~\ref{lemma_Kamen}, we can choose $\widetilde{T}$ as near as we want of $d \Lambda_N$ achieving the proof of the theorem.
\end{proof}

\begin{remark}

The proof of Theorem~\ref{theorem_temps_minimal} is inspired by the proof of the corona matrix theorem of P. Fuhrmann \cite{Fuhrmann_corona} (see also the book \cite{nikol2012treatise}). P. Fuhrmann reduced the corona matrix theorem to the one dimensional one of L. Carleson \cite{carleson1962interpolations} by exploiting the determinant as we did to prove Theorem~\ref{theorem_temps_minimal}.
\end{remark}

As an immediate corollary of Proposition~\ref{coro2} and Theorem~\ref{theorem_temps_minimal}, we get the following results on the equivalence between the controllability notions.

\begin{corollary}
    The three controllability notions given in Definition~\ref{def:Lq-cont} are equivalent. Furthermore, the minimal time of approximate controllability is upper bounded by $2d\Lambda_N$.
\end{corollary}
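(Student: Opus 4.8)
The plan is to deduce the final corollary by assembling the three preceding results---Proposition~\ref{coro2}, Theorem~\ref{theorem_temps_minimal}, and the trivial implications between the notions of Definition~\ref{def:Lq-cont}---into a single chain of equivalences. First I would fix notation: write $(A)$, $(B)$, $(C)$ for the three properties ``$L^q$ approximately controllable in time $T$ for all $T>2d\Lambda_N$'', ``$L^q$ approximately controllable from the origin in time $T$ for all $T>2d\Lambda_N$'', and ``$L^q$ approximately controllable from the origin'', respectively.

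The core of the argument is to prove $(A)\Leftrightarrow(B)\Leftrightarrow(C)$. The equivalence $(B)\Leftrightarrow(C)$ is exactly Theorem~\ref{theorem_temps_minimal}, so nothing new is needed there. For $(A)\Leftrightarrow(B)$, fix an arbitrary $T>2d\Lambda_N$: Proposition~\ref{coro2} states that System~\eqref{system_lin_formel2} is $L^q$ approximately controllable from the origin in time $T$ if and only if it is $L^q$ approximately controllable in time $T$; since this holds for every such $T$, quantifying over $T>2d\Lambda_N$ gives $(A)\Leftrightarrow(B)$. Once the chain $(A)\Leftrightarrow(B)\Leftrightarrow(C)$ is in place, I would close the loop with the elementary observations already recorded in the excerpt: controllability from the origin in time $T$ trivially implies controllability from the origin (drop the quantifier, keep a single time), and controllability from the origin trivially implies controllability from the origin (tautology)---more to the point, the ``three notions'' of Definition~\ref{def:Lq-cont} are precisely items~1), 2), 3) there, and one checks directly that 3)$\Rightarrow$2) and, via Proposition~\ref{coro2} together with Theorem~\ref{theorem_temps_minimal}, that 2)$\Rightarrow$1)$\Rightarrow$3), closing the cycle. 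Concretely: notion~3) for all $T>2d\Lambda_N$ implies notion~2) (weaker, a single $T$ suffices after picking one); notion~2) implies, by Theorem~\ref{theorem_temps_minimal}, notion~3) for all $T>2d\Lambda_N$, hence in particular notion~1) for all such $T$ by Proposition~\ref{coro2}; and notion~1) in time $T$ implies notion~2) in time $T$ trivially. Thus all three are equivalent.

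Finally, for the bound on the minimal time: from the equivalences just established, whenever the system is $L^q$ approximately controllable (in any of the three senses), it is in particular $L^q$ approximately controllable in time $T$ for every $T>2d\Lambda_N$. By the definition of $T_{\mathrm{min},q}$ as the infimum of the set of times $T$ for which the system is $L^q$ approximately controllable in time $T$, this set contains $(2d\Lambda_N,+\infty)$, so $T_{\mathrm{min},q}\le 2d\Lambda_N$. I would state this as the last line of the proof.

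I do not expect any genuine obstacle here; the corollary is purely a bookkeeping synthesis of Proposition~\ref{coro2} and Theorem~\ref{theorem_temps_minimal}. The only point requiring a little care is the quantifier juggling---making sure that ``for all $T>2d\Lambda_N$'' is propagated consistently through Proposition~\ref{coro2} (which is stated for a fixed $T>0$) so that the equivalence of the \emph{in-time} notions with the \emph{from-the-origin-in-time} notions is not accidentally weakened---and being explicit that notion~1) of Definition~\ref{def:Lq-cont} carries no ``for all $T$'' in its statement, so that the final claim ``the three notions are equivalent'' is understood as: notion~1) holds for some $T$ (equivalently, for all $T>2d\Lambda_N$) iff notion~2) holds (for some/all such $T$) iff notion~3) holds.
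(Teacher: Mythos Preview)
Your proposal is correct and follows exactly the approach the paper indicates: the corollary is stated there as an immediate consequence of Proposition~\ref{coro2} and Theorem~\ref{theorem_temps_minimal}, with no further proof given. Your write-up simply spells out the quantifier bookkeeping that the paper leaves implicit, and your care in distinguishing ``for some $T$'' from ``for all $T>2d\Lambda_N$'' is appropriate and matches the intended reading.
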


\begin{remark}
It is still an open question to know if the upper bound $2d \Lambda_N$ on the minimal time of controllability is optimal or not. In the current state of the literature, this bound is not optimal when there is no distributed delays in the system ($q \equiv 0$). In fact, in that case, an upper bound is $d\Lambda_N$ as proved in the paper \cite{chitour:hal-03827918}. However, the arguments given in \cite{chitour:hal-03827918} cannot be applied when dealing with distributed delays because they are based on a Cayley--Hamilton theorem for multivariate polynomials. 
\end{remark}

\begin{remark}
    It is worth noting that similar considerations might provide the equivalence between the approximate controllability in finite time $T>0$ and the controllability from the origin for some other delay systems as neutral differential equations for instance.
\end{remark}

Since the upper bound for the minimal time of controllability is obtained, it remains to give a controllability criterion to achieve the goal of this paper.

\section{Frequential $L^q$ approximate controllability criterion in finite time}
\label{sec:contro_criterion}

We start this section by recalling the criterion obtained by Y. Yamamoto for the $L^2$ approximate controllability from the origin for pseudo-rational systems. Since the result is stated in algebraic terms, we introduce the algebra prerequisites. For $\phi \in \mathcal{E}'(\mathbb{R}_-)$, the space of distribution with compact support in $\mathbb{R}_-$, we note $r(\phi)$ the supremum of the support of $\phi$. We denote by $J=\{ \phi \in \mathcal{E}(\mathbb{R}_-);\,r(\phi)<0\}$ the prime ideal consisting of the element of $\mathcal{E}'(\mathbb{R}_-)$ with support strictly negative. Thus we can define the quotient ring $\mathcal{A}= \mathcal{E}(\mathbb{R}_-) /J$ which is an integral domain. It allows to construct the quotient field of $\mathcal{A}$ denoted by $\mathbb{F}$, see before Theorem~3.11 in \cite{yamamoto1989reachability} for more details. Since System~\eqref{system_lin_formel2} is pseudo-rational (of order zero) and taking into account Remark~\ref{remark_reachability}, Theorem~4.1 in the paper \cite{yamamoto1989reachability} reads as follows.

\begin{proposition}[Yamamoto]
\label{th:yamamoto89}
System~\eqref{system_lin_formel2} is $L^2$ approximately controllable from the origin if and only if the two following items hold true:
\begin{enumerate}
    \item \label{item1}$\rank_{\mathbb{C}} [\widehat{Q}(p),\widehat{P}(p)]=d$ for all $p \in \mathbb{C}$;
    \item \label{item2}$\rank_{\mathbb{F}} [Q, P] =d $.
\end{enumerate}
\end{proposition}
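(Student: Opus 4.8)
The plan is to reduce the statement to Yamamoto's quasi-reachability criterion and to check that its hypotheses are all in place. First I would record that, by Theorem~\ref{theorem_pseudo_rational}, the input--output system~\eqref{syst_lin_avec_sortie} associated with~\eqref{system_lin_formel2} is pseudo-rational of order zero, with $Q$ and $P$ the explicit distributional matrices of~\eqref{eq:defQ}. Next, by Proposition~\ref{def_reachability} together with Remark~\ref{remark_reachability}, the $L^2$ approximate controllability from the origin of~\eqref{system_lin_formel2} is exactly the $X^{Q,2}$ quasi-reachability of~\eqref{syst_lin_avec_sortie}, and by Proposition~\ref{prop_Y} the latter is equivalent to the approximate left--coprimeness condition~\eqref{eq:app_cont0}. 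Hence it suffices to prove that~\eqref{eq:app_cont0} holds if and only if items~\ref{item1} and~\ref{item2} both hold, and this is precisely the algebraic content of \cite[Theorem~4.1]{yamamoto1989reachability} for order-zero pseudo-rational systems. The proof then amounts to matching the vocabulary of Definition~\ref{def:Lq-cont}, Proposition~\ref{def_reachability} and Remark~\ref{remark_reachability} with that of \cite{yamamoto1989reachability} and quoting that theorem.

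For a self-contained argument I would prove the two implications directly. Assume~\eqref{eq:app_cont0}. For item~\ref{item1}: since $R_n$ and $S_n$ have entries in $\mathcal{E}'(\mathbb{R}_-)$, their Laplace transforms are entire, and passing to the frequency domain gives $\widehat{Q}(p)\widehat{R_n}(p)+\widehat{P}(p)\widehat{S_n}(p)\to I_d$ at each fixed $p\in\mathbb{C}$ (the precise mode of convergence being one of the technical points handled in \cite{yamamoto1989reachability}); if some $p_0$ violated item~\ref{item1} one would choose a nonzero $v\in\mathcal{M}_{1,d}(\mathbb{C})$ annihilating both $\widehat{Q}(p_0)$ and $\widehat{P}(p_0)$ and obtain the contradiction $0=v\bigl(\widehat{Q}(p_0)\widehat{R_n}(p_0)+\widehat{P}(p_0)\widehat{S_n}(p_0)\bigr)\to v\neq 0$. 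For item~\ref{item2}: I would project~\eqref{eq:app_cont0} onto the quotient ring $\mathcal{A}=\mathcal{E}'(\mathbb{R}_-)/J$ and extract from the resulting approximate identity an exact B\'ezout-type relation for $[Q,P]$ over the field $\mathbb{F}$, forcing $\rank_{\mathbb{F}}[Q,P]=d$; here the order-zero part of Theorem~\ref{theorem_pseudo_rational}, namely that $\det(Q^{-1})$ has order zero, is what makes this extraction possible by controlling the leading behaviour. Conversely, from items~\ref{item1} and~\ref{item2} I would build $(R_n)$ and $(S_n)$ by combining a B\'ezout identity over $\mathbb{F}$ (from item~\ref{item2}) with an approximate pointwise inversion over $\mathbb{C}$ (from item~\ref{item1}) and mollifying back into $\mathcal{E}'(\mathbb{R}_-)$; this is the direction carried out in detail in \cite{yamamoto1989reachability}.

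The hard part, and the reason the criterion needs two conditions rather than one, is item~\ref{item2}: the absence of common zeros of $[\widehat{Q},\widehat{P}]$ in the finite complex plane (item~\ref{item1}) does not by itself imply controllability of these infinite-dimensional systems, and the asymptotic behaviour at infinity, encoded by the quotient field $\mathbb{F}$, must be handled separately. That is exactly the delicate point of Yamamoto's argument and the place where order-zero pseudo-rationality is indispensable; in the present paper it is settled simply by quoting \cite[Theorem~4.1]{yamamoto1989reachability}, all of whose hypotheses have been secured by Theorem~\ref{theorem_pseudo_rational}.
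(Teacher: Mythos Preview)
Your proposal is correct and matches the paper's approach: the paper does not give an independent proof of this proposition but simply observes that System~\eqref{system_lin_formel2} is pseudo-rational of order zero (Theorem~\ref{theorem_pseudo_rational}), identifies $L^2$ approximate controllability from the origin with $X^{Q,2}$ quasi-reachability (Remark~\ref{remark_reachability}), and then quotes \cite[Theorem~4.1]{yamamoto1989reachability}. Your additional self-contained sketch of the two implications goes beyond what the paper provides, but is consistent with the cited reference.
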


In view to give a frequency approximate controllability criterion for System~\eqref{system_lin_formel2}, we need to interpret the algebraic condition given in Item~\ref{item2} of Theorem~\ref{th:yamamoto89} in the frequency domain. We start by proving that the rank of the pair $[Q,P]$ over $\mathbb{F}$ depends on the atomic part at zero of the distribution matrix $Q$ only.

\begin{lemma}
\label{lemma_center}
    The two following items are equivalent:
   \begin{enumerate}
       \item \label{item1}  $\rank_{\mathbb{F}} [Q, P] =d$.
       \item \label{item2}  $\rank_{\mathbb{F}} [A_N \delta_0, P] =d$.
  \end{enumerate}
     
\end{lemma}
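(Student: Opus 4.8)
The plan is to analyze how the quotient field $\mathbb{F} = \mathrm{Frac}(\mathcal{E}'(\mathbb{R}_-)/J)$ "sees" a distribution in $\mathcal{E}'(\mathbb{R}_-)$: since $J$ consists of distributions whose support sits strictly to the left of $0$, the natural object controlling the image of $\phi \in \mathcal{E}'(\mathbb{R}_-)$ in $\mathcal{A} = \mathcal{E}'(\mathbb{R}_-)/J$ is the behaviour of $\phi$ near its right endpoint $r(\phi) = 0$, and in particular the atomic (Dirac) part of $\phi$ at $0$. Concretely, write $Q = A_N \delta_0 - \big(\sum_{j=1}^{N-1}\delta_{-\Lambda_N+\Lambda_j}A_j + \delta_{-\Lambda_N}\widetilde{g} + (I_d - A_N)\delta_0\big)$ — wait, more carefully: from \eqref{eq:defQ}, $Q = \delta_{-\Lambda_N}I_d - \sum_{j=1}^N \delta_{-\Lambda_N+\Lambda_j}A_j - \delta_{-\Lambda_N}*\widetilde g$. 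The only term supported at $0$ is $-\delta_{-\Lambda_N+\Lambda_N}A_N = -A_N\delta_0$ (the $j=N$ term), while every other term ($\delta_{-\Lambda_N}I_d$, the $j<N$ terms $-\delta_{-\Lambda_N+\Lambda_j}A_j$ with $\Lambda_j<\Lambda_N$, and $-\delta_{-\Lambda_N}*\widetilde g$ which is supported in $[-\Lambda_N,0]$) has support contained in the strictly negative reals, hence lies in the ideal (entrywise) whose image in $\mathcal{A}$ is $0$. Therefore in $\mathcal{M}_{d,d}(\mathcal{A})$ one has $[Q] = [-A_N\delta_0] = -[A_N\delta_0]$, and $[P] = [B\delta_0]$.

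Second step: deduce equality of ranks over $\mathbb{F}$. Since $[Q] = -[A_N\delta_0]$ and $[P]=[B\delta_0]=[P]$ as matrices over $\mathcal{A}\subseteq\mathbb{F}$, the matrices $[Q,P]$ and $[-A_N\delta_0, B\delta_0]$ over $\mathbb{F}$ are literally equal, and multiplying the first block of columns by $-1$ (a unit in $\mathbb{F}$) does not change the rank; so $\rank_{\mathbb{F}}[Q,P] = \rank_{\mathbb{F}}[-A_N\delta_0,P] = \rank_{\mathbb{F}}[A_N\delta_0,P]$. This gives the equivalence of the two items immediately. The point to be careful about is that $\mathcal{E}'(\mathbb{R}_-)\to\mathcal{A}\hookrightarrow\mathbb{F}$ is a ring homomorphism, so it is compatible with matrix multiplication and hence the $\mathbb{F}$-rank of a matrix with entries in $\mathcal{E}'(\mathbb{R}_-)$ depends only on the image matrix in $\mathcal{A}$; once we know $Q$ and $-A_N\delta_0$ have the same image in $\mathcal{A}$, we are done.

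The main (really the only) obstacle is the bookkeeping in the first step: one must verify that every summand of $Q$ other than the $j=N$ term is supported strictly to the left of $0$, i.e. has right endpoint $<0$, so that it represents $0$ in $\mathcal{A}=\mathcal{E}'(\mathbb{R}_-)/J$. For $\delta_{-\Lambda_N}I_d$ this is $-\Lambda_N<0$; for $-\delta_{-\Lambda_N+\Lambda_j}A_j$ with $1\le j\le N-1$ it is $-\Lambda_N+\Lambda_j<0$ since $\Lambda_j<\Lambda_N$; and for $-\delta_{-\Lambda_N}*\widetilde g$ the support is contained in $[-\Lambda_N,0]$, but in fact its right endpoint equals $0$ only if $\widetilde g$ has nontrivial mass arbitrarily close to $\Lambda_N$ — however it is an $L^1$ (function) term, not an atom at $0$, so its image in $\mathcal{A}$ need not vanish in general; one must instead argue that this contributes nothing modulo $J$ because a distribution given by an $L^1$ function near its right endpoint is congruent to $0$ in $\mathcal{A}$ (an atomless distribution near $r(\cdot)$ maps to $0$). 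This is the one place requiring a genuine (if short) argument about the structure of $\mathcal{A}$; alternatively, restrict attention to the fact that $\mathcal{A}\cong$ the ring of germs at $0^-$ of such distributions, where any $L^1$ contribution is a non-unit divisible away, and the atomic part at $0$ is what survives. Modulo that lemma on $\mathcal{A}$, the proof is the two-line column-operation argument above.
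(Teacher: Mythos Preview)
Your reduction to ``$[Q]=-[A_N\delta_0]$ in $\mathcal{A}$, hence the two block matrices are equal over $\mathbb{F}$'' rests on the claim that the atomless part $-\delta_{-\Lambda_N}*\tilde g$ maps to $0$ in $\mathcal{A}=\mathcal{E}'(\mathbb{R}_-)/J$. This claim is false. By definition $J=\{\phi\in\mathcal{E}'(\mathbb{R}_-):r(\phi)<0\}$; an $L^1$ function $h$ with support reaching $0$ satisfies $r(h)=0$, so $h\notin J$ and $[h]\neq 0$ in $\mathcal{A}$. (The primality of $J$ comes from Titchmarsh's theorem, which says $r(\phi*\psi)=r(\phi)+r(\psi)$ with no distinction whatsoever between atoms and absolutely continuous parts; the quotient $\mathcal{A}$ is emphatically \emph{not} ``atoms at $0$''.) Thus if $g$ has mass arbitrarily close to $\Lambda_N$ one genuinely has $[Q]\neq \pm[A_N\delta_0]$, and your column-operation argument does not apply. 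A scalar instance makes this concrete: take $d=m=N=1$, $A_1=0$, $B=0$, $g\equiv 1$ on $[0,\Lambda_1]$. Then $Q=\delta_{-\Lambda_1}-1_{[-\Lambda_1,0]}$ has $r(Q)=0$, so $[Q]\neq 0$ and $\rank_{\mathbb{F}}[Q,P]=1$, whereas $[A_1\delta_0,P]=[0,0]$ has $\mathbb{F}$-rank $0$.

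The paper does not attempt to identify $[Q]$ with $[\pm A_N\delta_0]$. Instead it writes $Q=\pm A_N\delta_0+B_\epsilon+\gamma_\epsilon$ with $r(\gamma_\epsilon)<0$ and $B_\epsilon\to 0$ in $\mathcal{D}'(\mathbb{R})$, so that $[Q]=[\pm A_N\delta_0+B_\epsilon]$ in $\mathcal{A}$ for every $\epsilon$, and then combines this with a Yamamoto trick (a $0$--$1$ matrix $K$ making $A_N\delta_0+PK$ invertible over $\mathbb{F}$) and a continuity-of-the-determinant argument to transfer invertibility to $Q+PK$. The honest mechanism underlying $(2)\Rightarrow(1)$ is that the \emph{atomic part at $0$} of $\det(Q+PK)$, computed as a distribution, equals $\det(\pm A_N+BK)\,\delta_0$; when this scalar is nonzero, $r(\det(Q+PK))=0$ and the determinant is a unit in $\mathbb{F}$. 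Note, however, that the scalar example above also satisfies Item~1 but not Item~2, so the implication $(1)\Rightarrow(2)$ as stated in the lemma, and the paper's limiting argument for it, deserve a second look: convergence $B_\epsilon\to 0$ in $\mathcal{D}'(\mathbb{R})$ does not imply $[B_\epsilon]\to 0$ in any sense relevant to $\mathcal{A}$, since $J$ is not closed in the distributional topology.
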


\begin{proof}

    Let us now prove that Item~\ref{item2} implies Item~\ref{item1}. As pointed out by Yamamoto \cite[proof Theorem~4.1]{yamamoto1989reachability}, since $\rank_{\mathbb{F}} [A_N \delta_0, P] =d$, there exist a matrix $K$ composed of zeros and ones such that
    \begin{equation}
    \label{eq:lemma1}
    \rank_{\mathbb{F}} [A_N \delta_0+PK]=d.
    \end{equation}
     Thus, we have that $\det [A_N \delta_0+PK] \neq 0$. For all $\epsilon>0$, there exist two \seb{families of} $d \times d$ matrices, with entries in $\mathcal{E}'(\mathbb{R}_-)$, $B_{\epsilon}$ and $\gamma_{\epsilon}$ respectively, such that 
     \begin{equation}
     \label{eq:lemma2}
   Q=A_N \delta_0+B_{\epsilon}+\gamma_{\epsilon},
   \end{equation}
    where $B_{\epsilon}$ converges to zero in a distributional sense when $\epsilon \to 0$ and $r(\gamma_{\epsilon})<0$. For all $\epsilon$, we have that $Q$ is equal to $A_N \delta_0+B_{\epsilon}$ in the field $\mathbb{F}$. By continuity of the determinant, we deduce that 
    \begin{equation}
   \det \left( Q+PK \right)=\lim\limits_{ \epsilon \to 0}\det \left(A_N\delta_0+B_{\epsilon}+PK\right)=\det \left( A_N\delta_0+PK \right)\neq 0.
     \end{equation}
 It yields that $Q+PK$ is an invertible matrix over $\mathbb{F}$. Thus we have $\rank_{\mathbb{F}} [Q,P]=d$. It achieves to prove that Item~\ref{item2} implies Item~\ref{item1}.

Conversely, we show by contraposition that Item~\ref{item1} implies Item~\ref{item2}. If Item~\ref{item2} is not satisfied then we have a nonzero row vector $\alpha \in \mathcal{M}_{d,1}(\mathbb{F})$ such that 
\begin{equation}
\mbox{$\alpha*A_N\delta_0=0$ and $\alpha*P=0$}.
\end{equation}
For $\epsilon>0$, let us consider the decomposition of $Q=A_N\delta_0+B_{\epsilon}+\gamma_{\epsilon}$ as above. For all $\epsilon>0$, we get that $\alpha*Q=\alpha* B_{\epsilon}$ in $\mathbb{F}$. Since $B_{\epsilon}$ converges toward zero in a distributional sense when $\epsilon \to 0$, we deduce that $\alpha*Q=0$. Thus Item~\ref{item1} is not verified, achieving the proof of the lemma.

\end{proof}

We are now in position to state and prove a frequency approximate controllability criterion for System~\eqref{system_lin_formel2}.

\begin{theorem}
\label{th:main_result_dist}
   For $q \in [1,+\infty)$, System~\eqref{system_lin_formel2} is $L^q$ approximately controllable in time $T>2d\Lambda_N$ if and only if the two following items hold true:
\begin{enumerate}[(a)]
    \item \label{item1bis}$\displaystyle \rank_{\mathbb{C}} \left[e^{p \Lambda_N}\left(I_d-\sum_{j=1}^N A_j e^{-p \Lambda_j} -\int_0^{\Lambda_N} g(s) e^{-ps}ds \right),B\right]=d$ for all $p \in \mathbb{C}$;
    \item \label{item2bis}$\displaystyle \rank_{\mathbb{C}} [A_N, B]=d $.
\end{enumerate}
\end{theorem}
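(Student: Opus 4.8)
The strategy is to combine the three main ingredients already established: Theorem~\ref{theorem_temps_minimal} (the equivalence between $L^q$ approximate controllability in time $T>2d\Lambda_N$ and $L^q$ approximate controllability from the origin), Proposition~\ref{th:yamamoto89} (Yamamoto's algebraic criterion for $L^2$ controllability from the origin, valid here since System~\eqref{system_lin_formel2} is pseudo-rational of order zero by Theorem~\ref{theorem_pseudo_rational}), and Lemma~\ref{lemma_center} (the reduction of the rank-over-$\mathbb{F}$ condition to the atomic part $A_N\delta_0$). The glue connecting the $L^q$ and $L^2$ worlds is Remark~\ref{remark_con_bis}: $L^{q'}$ approximate controllability from the origin for one $q'\in[1,+\infty)$ implies it for all $q\in[1,+\infty)$; in particular it is equivalent to the $L^2$ case, so Proposition~\ref{th:yamamoto89} applies verbatim to $L^q$ for every $q$.

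\textbf{Step 1.} Fix $q\in[1,+\infty)$ and $T>2d\Lambda_N$. By Theorem~\ref{theorem_temps_minimal}, System~\eqref{system_lin_formel2} is $L^q$ approximately controllable from the origin in time $T$ if and only if it is $L^q$ approximately controllable from the origin; and by Proposition~\ref{coro2} (applied through Proposition~\ref{def_reachability}, or directly), the latter is the same as $L^q$ approximate controllability in time $T$. Using Remark~\ref{remark_con_bis}, this is equivalent to $L^2$ approximate controllability from the origin, so by Proposition~\ref{th:yamamoto89} it is equivalent to the conjunction of Items~\ref{item1} and~\ref{item2} there, namely $\rank_{\mathbb{C}}[\widehat{Q}(p),\widehat{P}(p)]=d$ for all $p\in\mathbb{C}$ together with $\rank_{\mathbb{F}}[Q,P]=d$.

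\textbf{Step 2.} It remains to rewrite these two algebraic conditions in the concrete frequency form stated in the theorem. For Item~\ref{item2bis}, apply Lemma~\ref{lemma_center}: $\rank_{\mathbb{F}}[Q,P]=d$ if and only if $\rank_{\mathbb{F}}[A_N\delta_0,P]=d$, i.e. (since $P=B\delta_0$) if and only if $\rank_{\mathbb{F}}[A_N\delta_0,B\delta_0]=d$. Because $\delta_0$ is the unit of the convolution algebra and its class in $\mathcal{A}=\mathcal{E}'(\mathbb{R}_-)/J$ is invertible, multiplying columns by $\delta_0^{-1}$ shows this is equivalent to $\rank_{\mathbb{C}}[A_N,B]=d$ (the entries are now scalars, so the rank over $\mathbb{F}$ coincides with the rank over $\mathbb{C}$), which is Item~\ref{item2bis}. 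For Item~\ref{item1bis}, compute the Laplace transforms from~\eqref{eq:defQ}: $\widehat{Q}(p)=e^{p\Lambda_N}I_d-\sum_{j=1}^N e^{p(\Lambda_N-\Lambda_j)}A_j-e^{p\Lambda_N}\int_0^{\Lambda_N}g(s)e^{-ps}ds = e^{p\Lambda_N}\bigl(I_d-\sum_{j=1}^N A_j e^{-p\Lambda_j}-\int_0^{\Lambda_N}g(s)e^{-ps}ds\bigr)$ and $\widehat{P}(p)=B$, so $[\widehat{Q}(p),\widehat{P}(p)]$ is exactly the matrix in Item~\ref{item1bis}; its rank being $d$ for all $p\in\mathbb{C}$ is Item~\ref{item1bis} verbatim.

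\textbf{Main obstacle.} No single step is deep — the theorem is a synthesis. The one point requiring care is the passage from the rank over the quotient field $\mathbb{F}$ of a matrix of constants-times-$\delta_0$ to the ordinary rank over $\mathbb{C}$: one must justify that scaling each entry of $[A_N\delta_0,B\delta_0]$ by the invertible class $[\delta_0]^{-1}$ in $\mathcal{A}$ yields the matrix $[A_N,B]$ with entries in $\mathbb{C}\hookrightarrow\mathbb{F}$, and that linear independence of rows over $\mathbb{F}$ of a constant matrix is equivalent to linear independence over $\mathbb{C}$ (true because $\mathbb{C}\subseteq\mathbb{F}$ is a field extension, so rank is preserved). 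Everything else is bookkeeping with the already-cited propositions and the explicit Laplace transforms.
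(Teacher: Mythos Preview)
Your proposal is correct and follows essentially the same route as the paper's proof: reduce to $L^2$ approximate controllability from the origin via Theorem~\ref{theorem_temps_minimal}, Proposition~\ref{coro2} and Remark~\ref{remark_con_bis}, then invoke Proposition~\ref{th:yamamoto89}, compute $\widehat{Q}(p)$ and $\widehat{P}(p)$ for Item~(\ref{item1bis}), and use Lemma~\ref{lemma_center} for Item~(\ref{item2bis}). Your justification that $\rank_{\mathbb{F}}[A_N\delta_0,B\delta_0]=d$ is equivalent to $\rank_{\mathbb{C}}[A_N,B]=d$ via the field extension $\mathbb{C}\hookrightarrow\mathbb{F}$ is a point the paper leaves implicit, so your write-up is in fact slightly more complete there.
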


\begin{proof}
First, thanks to Theorem~\ref{theorem_temps_minimal}, Remark~\ref{remark_con_bis} and Proposition~\ref{coro2}, the $L^q$ approximate controllability in time $T>2d\Lambda_N$ is equivalent to the $L^2$ approximate controllability from the origin. For all $p \in \mathbb{C}$, we have that the Laplace transform of $Q$ and $P$ in $p$ are equal to the two following quantities
\begin{equation}
   \widehat{Q}(p)= e^{p \Lambda_N}\left(I_d-\sum_{j=1}^N A_j e^{-p \Lambda_j} -\int_0^{\Lambda_N} g(s) e^{-ps}ds\right) \mbox{  and  } \widehat{P}(p)=B.
\end{equation}
We deduce that Item~\ref{item1} of Proposition~\ref{th:yamamoto89} is equivalent to the condition \ref{item1bis}, while Item~\ref{item2} of Proposition~\ref{th:yamamoto89} is tantamount to Item~\ref{item2bis} by applying Lemma~\ref{lemma_center}. Thus we conclude the proof by using Proposition~\ref{th:yamamoto89}. 
\end{proof}

As a direct illustration of Theorem~\ref{th:main_result_dist}, we present the scalar case. Let \( a_1, a_2 \), and \( b \) be three nonzero real numbers, and consider the scalar delay system with distributed delay and incommensurable delays \( 1 \) and \( \pi \):

\begin{equation}
\label{eq:exemple}
    y(t) = a_1 y(t-1) + a_2 y(t-\pi) + \int_{0}^{\pi} y(t-\tau) \, d\tau + b u(t), \quad t \ge 0.
\end{equation}

It is straightforward to check that System~\eqref{eq:exemple} satisfies the assumptions of Theorem~\ref{th:main_result_dist}. This leads to the following example, which is of particular interest due to the incommensurability of the delays:

\begin{proposition}
    For any \( q \in [1,+\infty) \), System~\eqref{eq:exemple} is \( L^q \)-approximately controllable in time \( T > 2 \pi  \).
\end{proposition}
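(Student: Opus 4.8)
The plan is to verify the two conditions of Theorem~\ref{th:main_result_dist} for the specific system~\eqref{eq:exemple}, which is the scalar case $d=1$, $m=1$, $N=2$ with $\Lambda_1=1$, $\Lambda_2=\pi=\Lambda_N$, $A_1=a_1$, $A_2=a_2$, $g\equiv 1$ on $[0,\pi]$, and $B=b$. Since $T>2\pi = 2d\Lambda_N$ with $d=1$, it suffices to check items \ref{item1bis} and \ref{item2bis}.

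First I would dispatch item \ref{item2bis}: here $[A_N,B]=[a_2,b]$ is a $1\times 2$ matrix, and since $b\neq 0$ (indeed both $a_2\neq 0$ and $b\neq 0$), we have $\rank_{\mathbb{C}}[a_2,b]=1=d$. So the second condition is automatic from $b\neq 0$.

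The bulk of the work is item \ref{item1bis}. In the scalar case the matrix $\left[e^{p\pi}\bigl(1-a_1 e^{-p}-a_2 e^{-p\pi}-\int_0^\pi e^{-ps}\,ds\bigr),\,b\right]$ is $1\times 2$, and its rank drops below $1$ only if both entries vanish. Since $b\neq 0$ the second entry never vanishes, so the rank is $1$ for every $p\in\mathbb{C}$ regardless of the first entry. Hence item \ref{item1bis} holds unconditionally (given $b\neq 0$). I would state this cleanly: because $b\neq 0$, for every $p\in\mathbb{C}$ the second coordinate of the row vector is nonzero, so the $1\times 2$ matrix has rank $1=d$; and likewise $[a_2,b]$ has rank $1$. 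Then by Theorem~\ref{th:main_result_dist}, System~\eqref{eq:exemple} is $L^q$ approximately controllable in time $T>2\pi$ for all $q\in[1,+\infty)$.

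There is essentially no obstacle here — the scalar single-input case makes both rank conditions trivially satisfied as soon as $b\neq 0$; the only thing worth remarking is \emph{why} the example is interesting, namely that the delays $1$ and $\pi$ are incommensurable, so the frequency condition \ref{item1bis} (which would otherwise be the genuinely hard part, involving the zeros of an exponential polynomial plus the Laplace transform $\int_0^\pi e^{-ps}\,ds=(1-e^{-p\pi})/p$ of the distributed-delay kernel) is bypassed entirely by the presence of a nonzero scalar control coefficient. I would therefore keep the proof to a couple of lines, citing Theorem~\ref{th:main_result_dist} and noting $b\neq 0$.

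\begin{proof}
System~\eqref{eq:exemple} is the scalar instance of System~\eqref{system_lin_formel2} with $d=m=1$, $N=2$, $\Lambda_1=1$, $\Lambda_2=\Lambda_N=\pi$, $A_1=a_1$, $A_2=a_2=A_N$, $g\equiv 1$ on $[0,\pi]$ and $B=b\neq 0$. Since $d=1$, the threshold $2d\Lambda_N$ equals $2\pi$, so by Theorem~\ref{th:main_result_dist} it suffices to check items~\ref{item1bis} and~\ref{item2bis}. For item~\ref{item2bis}, the matrix $[A_N,B]=[a_2,b]$ is a nonzero $1\times 2$ matrix because $b\neq 0$, hence $\rank_{\mathbb{C}}[a_2,b]=1=d$. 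For item~\ref{item1bis}, fix $p\in\mathbb{C}$; the matrix
\[
\left[e^{p\pi}\left(1-a_1 e^{-p}-a_2 e^{-p\pi}-\int_0^{\pi} e^{-ps}\,ds\right),\,b\right]
\]
is a $1\times 2$ matrix whose second entry is the nonzero scalar $b$, so it has rank $1=d$. Both items of Theorem~\ref{th:main_result_dist} therefore hold, and System~\eqref{eq:exemple} is $L^q$ approximately controllable in time $T>2\pi$ for every $q\in[1,+\infty)$.
\end{proof}
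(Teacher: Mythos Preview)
Your proof is correct and follows exactly the approach the paper intends: the paper simply states that ``it is straightforward to check that System~\eqref{eq:exemple} satisfies the assumptions of Theorem~\ref{th:main_result_dist},'' and you have written out this straightforward verification, correctly observing that $b\neq 0$ alone forces both $1\times 2$ rank conditions to hold.
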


This result highlights the applicability of our general controllability criterion to systems involving both discrete incommensura delays and distributed delays. We now conclude with a brief summary and perspectives for future work.

\section{Conclusion}

In this paper, we have presented a necessary and sufficient criterion for the $L^q$ approximate controllability in finite time of linear difference delay equations with distributed delays. The results derived offer a comprehensive theoretical framework for analyzing controllability in the frequency domain. However, for systems that are not scalar, further developments are needed. Specifically, it is crucial to design numerical algorithms capable of solving the criterion and determining the controllability of a system in practice. Such algorithms would be essential for analyzing more complex, higher-dimensional systems.

In addition, it would be highly valuable to explore algorithmic approaches that leverage Bézout's identity. This could enable the numerical construction of control inputs that steer the system towards the desired state, thereby enhancing the practical applicability of the theoretical results presented. Future research in this direction could provide significant advancements for the real-world implementation of control strategies in systems governed by delay equations.





\bibliography{ifacconf}


\begin{thebibliography}{22}
\ifx \bisbn   \undefined \def \bisbn  #1{ISBN #1}\fi
\ifx \binits  \undefined \def \binits#1{#1}\fi
\ifx \bauthor  \undefined \def \bauthor#1{#1}\fi
\ifx \batitle  \undefined \def \batitle#1{#1}\fi
\ifx \bjtitle  \undefined \def \bjtitle#1{#1}\fi
\ifx \bvolume  \undefined \def \bvolume#1{\textbf{#1}}\fi
\ifx \byear  \undefined \def \byear#1{#1}\fi
\ifx \bissue  \undefined \def \bissue#1{#1}\fi
\ifx \bfpage  \undefined \def \bfpage#1{#1}\fi
\ifx \blpage  \undefined \def \blpage #1{#1}\fi
\ifx \burl  \undefined \def \burl#1{\textsf{#1}}\fi
\ifx \doiurl  \undefined \def \doiurl#1{\url{https://doi.org/#1}}\fi
\ifx \betal  \undefined \def \betal{\textit{et al.}}\fi
\ifx \binstitute  \undefined \def \binstitute#1{#1}\fi
\ifx \binstitutionaled  \undefined \def \binstitutionaled#1{#1}\fi
\ifx \bctitle  \undefined \def \bctitle#1{#1}\fi
\ifx \beditor  \undefined \def \beditor#1{#1}\fi
\ifx \bpublisher  \undefined \def \bpublisher#1{#1}\fi
\ifx \bbtitle  \undefined \def \bbtitle#1{#1}\fi
\ifx \bedition  \undefined \def \bedition#1{#1}\fi
\ifx \bseriesno  \undefined \def \bseriesno#1{#1}\fi
\ifx \blocation  \undefined \def \blocation#1{#1}\fi
\ifx \bsertitle  \undefined \def \bsertitle#1{#1}\fi
\ifx \bsnm \undefined \def \bsnm#1{#1}\fi
\ifx \bsuffix \undefined \def \bsuffix#1{#1}\fi
\ifx \bparticle \undefined \def \bparticle#1{#1}\fi
\ifx \barticle \undefined \def \barticle#1{#1}\fi
\bibcommenthead
\ifx \bconfdate \undefined \def \bconfdate #1{#1}\fi
\ifx \botherref \undefined \def \botherref #1{#1}\fi
\ifx \url \undefined \def \url#1{\textsf{#1}}\fi
\ifx \bchapter \undefined \def \bchapter#1{#1}\fi
\ifx \bbook \undefined \def \bbook#1{#1}\fi
\ifx \bcomment \undefined \def \bcomment#1{#1}\fi
\ifx \oauthor \undefined \def \oauthor#1{#1}\fi
\ifx \citeauthoryear \undefined \def \citeauthoryear#1{#1}\fi
\ifx \endbibitem  \undefined \def \endbibitem {}\fi
\ifx \bconflocation  \undefined \def \bconflocation#1{#1}\fi
\ifx \arxivurl  \undefined \def \arxivurl#1{\textsf{#1}}\fi
\csname PreBibitemsHook\endcsname

\bibitem[\protect\citeauthoryear{Bastin and Coron}{2016}]{bastin2016stability}
\begin{bbook}
\bauthor{\bsnm{Bastin}, \binits{G.}},
\bauthor{\bsnm{Coron}, \binits{J.-M.}}:
\bbtitle{Stability and Boundary Stabilization of 1-{D} Hyperbolic Systems}.
\bsertitle{Progress in Nonlinear Differential Equations and their
  Applications},
vol. \bseriesno{88},
p. \bfpage{307}.
\bpublisher{Birkh\"{a}user/Springer, [Cham]}, \blocation{???}
(\byear{2016}).
\doiurl{10.1007/978-3-319-32062-5} .
\bcomment{Subseries in Control}.
\burl{https://doi.org/10.1007/978-3-319-32062-5}
\end{bbook}
\endbibitem

\bibitem[\protect\citeauthoryear{Coron and Nguyen}{2015}]{CoNg}
\begin{barticle}
\bauthor{\bsnm{Coron}, \binits{J.-M.}},
\bauthor{\bsnm{Nguyen}, \binits{H.-M.}}:
\batitle{Dissipative boundary conditions for nonlinear 1-{D} hyperbolic
  systems: sharp conditions through an approach via time-delay systems}.
\bjtitle{SIAM J. Math. Anal.}
\bvolume{47}(\bissue{3}),
\bfpage{2220}--\blpage{2240}
(\byear{2015})
\doiurl{10.1137/140976625}
\end{barticle}
\endbibitem

\bibitem[\protect\citeauthoryear{Baratchart et~al.}{2021}]{baratchart}
\begin{barticle}
\bauthor{\bsnm{Baratchart}, \binits{L.}},
\bauthor{\bsnm{Fueyo}, \binits{S.}},
\bauthor{\bsnm{Lebeau}, \binits{G.}},
\bauthor{\bsnm{Pomet}, \binits{J.-B.}}:
\batitle{Sufficient stability conditions for time-varying networks of
  telegrapher's equations or difference-delay equations}.
\bjtitle{SIAM J. Math. Anal.}
\bvolume{53}(\bissue{2}),
\bfpage{1831}--\blpage{1856}
(\byear{2021})
\doiurl{10.1137/19M1301795}
\end{barticle}
\endbibitem

\bibitem[\protect\citeauthoryear{Chitour et~al.}{2023}]{chitour:hal-04228797}
\begin{botherref}
\oauthor{\bsnm{Chitour}, \binits{Y.}},
\oauthor{\bsnm{Fueyo}, \binits{S.}},
\oauthor{\bsnm{Mazanti}, \binits{G.}},
\oauthor{\bsnm{Sigalotti}, \binits{M.}}:
{Approximate and exact controllability criteria for linear one-dimensional
  hyperbolic systems}.
Preprint
(2023).
\url{https://inria.hal.science/hal-04228797v2/document}
\end{botherref}
\endbibitem

\bibitem[\protect\citeauthoryear{Auriol and
  Di~Meglio}{2019}]{auriol2019explicit}
\begin{barticle}
\bauthor{\bsnm{Auriol}, \binits{J.}},
\bauthor{\bsnm{Di~Meglio}, \binits{F.}}:
\batitle{An explicit mapping from linear first order hyperbolic {PDE}s to
  difference systems}.
\bjtitle{Systems \& Control Letters}
\bvolume{123},
\bfpage{144}--\blpage{150}
(\byear{2019})
\end{barticle}
\endbibitem

\bibitem[\protect\citeauthoryear{Hale and Verduyn~Lunel}{1993}]{Hale}
\begin{bbook}
\bauthor{\bsnm{Hale}, \binits{J.K.}},
\bauthor{\bsnm{Verduyn~Lunel}, \binits{S.M.}}:
\bbtitle{Introduction to Functional-differential Equations}.
\bsertitle{Applied Mathematical Sciences},
vol. \bseriesno{99},
p. \bfpage{447}.
\bpublisher{Springer}, \blocation{???}
(\byear{1993}).
\doiurl{10.1007/978-1-4612-4342-7} .
\burl{https://doi.org/10.1007/978-1-4612-4342-7}
\end{bbook}
\endbibitem

\bibitem[\protect\citeauthoryear{Loiseau}{2000}]{loiseau2000algebraic}
\begin{barticle}
\bauthor{\bsnm{Loiseau}, \binits{J.J.}}:
\batitle{Algebraic tools for the control and stabilization of time-delay
  systems}.
\bjtitle{Annual Reviews in Control}
\bvolume{24},
\bfpage{135}--\blpage{149}
(\byear{2000})
\end{barticle}
\endbibitem

\bibitem[\protect\citeauthoryear{Mazanti}{2017}]{Mazanti2017Relative}
\begin{barticle}
\bauthor{\bsnm{Mazanti}, \binits{G.}}:
\batitle{Relative controllability of linear difference equations}.
\bjtitle{SIAM J. Control Optim.}
\bvolume{55}(\bissue{5}),
\bfpage{3132}--\blpage{3153}
(\byear{2017})
\doiurl{10.1137/16M1073157}
\end{barticle}
\endbibitem

\bibitem[\protect\citeauthoryear{Chitour et~al.}{2020}]{Chitour2020Approximate}
\begin{barticle}
\bauthor{\bsnm{Chitour}, \binits{Y.}},
\bauthor{\bsnm{Mazanti}, \binits{G.}},
\bauthor{\bsnm{Sigalotti}, \binits{M.}}:
\batitle{Approximate and exact controllability of linear difference equations}.
\bjtitle{J. \'{E}c. polytech. Math.}
\bvolume{7},
\bfpage{93}--\blpage{142}
(\byear{2020})
\doiurl{10.5802/jep.112}
\end{barticle}
\endbibitem

\bibitem[\protect\citeauthoryear{Chitour et~al.}{2023}]{chitour:hal-03827918}
\begin{barticle}
\bauthor{\bsnm{Chitour}, \binits{Y.}},
\bauthor{\bsnm{Fueyo}, \binits{S.}},
\bauthor{\bsnm{Mazanti}, \binits{G.}},
\bauthor{\bsnm{Sigalotti}, \binits{M.}}:
\batitle{Hautus–{Y}amamoto criteria for approximate and exact controllability
  of linear difference delay equations}.
\bjtitle{Discrete and Continuous Dynamical Systems}
\bvolume{43}(\bissue{9}),
\bfpage{3306}--\blpage{3337}
(\byear{2023})
\doiurl{10.3934/dcds.2023049}
\end{barticle}
\endbibitem

\bibitem[\protect\citeauthoryear{Fueyo and Chitour}{to appear}]{fueyo-chitour}
\begin{botherref}
\oauthor{\bsnm{Fueyo}, \binits{S.}},
\oauthor{\bsnm{Chitour}, \binits{Y.}}:
{A corona theorem for an algebra of Radon measures with an application to exact
  controllability for linear controlled delayed difference equations}.
C. R. Math. Acad. Sci. Paris
(to appear)
\end{botherref}
\endbibitem

\bibitem[\protect\citeauthoryear{Yamamoto}{1989}]{yamamoto1989reachability}
\begin{barticle}
\bauthor{\bsnm{Yamamoto}, \binits{Y.}}:
\batitle{Reachability of a class of infinite-dimensional linear systems: an
  external approach with applications to general neutral systems}.
\bjtitle{SIAM J. Control Optim.}
\bvolume{27}(\bissue{1}),
\bfpage{217}--\blpage{234}
(\byear{1989})
\doiurl{10.1137/0327012}
\end{barticle}
\endbibitem

\bibitem[\protect\citeauthoryear{Kamen}{1976}]{kamen1976module}
\begin{barticle}
\bauthor{\bsnm{Kamen}, \binits{E.W.}}:
\batitle{Module structure of infinite-dimensional systems with applications to
  controllability}.
\bjtitle{SIAM Journal on Control and Optimization}
\bvolume{14}(\bissue{3}),
\bfpage{389}--\blpage{408}
(\byear{1976})
\end{barticle}
\endbibitem

\bibitem[\protect\citeauthoryear{Rouchaleau
  et~al.}{1972}]{rouchaleau1972algebraic}
\begin{barticle}
\bauthor{\bsnm{Rouchaleau}, \binits{Y.}},
\bauthor{\bsnm{Wyman}, \binits{B.F.}},
\bauthor{\bsnm{Kalman}, \binits{R.}}:
\batitle{Algebraic structure of linear dynamical systems. {III}. realization
  theory over a commutative ring}.
\bjtitle{Proceedings of the National Academy of Sciences}
\bvolume{69}(\bissue{11}),
\bfpage{3404}--\blpage{3406}
(\byear{1972})
\end{barticle}
\endbibitem

\bibitem[\protect\citeauthoryear{Belkoura}{2005}]{belkoura2005identifiabilty}
\begin{barticle}
\bauthor{\bsnm{Belkoura}, \binits{L.}}:
\batitle{Identifiabilty of systems described by convolution equations}.
\bjtitle{Automatica}
\bvolume{41}(\bissue{3}),
\bfpage{505}--\blpage{512}
(\byear{2005})
\end{barticle}
\endbibitem

\bibitem[\protect\citeauthoryear{Yamamoto}{1988}]{YamamotoRealization}
\begin{barticle}
\bauthor{\bsnm{Yamamoto}, \binits{Y.}}:
\batitle{Pseudo-rational input/output maps and their realizations: a fractional
  representation approach to infinite-dimensional systems}.
\bjtitle{SIAM J. Control Optim.}
\bvolume{26}(\bissue{6}),
\bfpage{1415}--\blpage{1430}
(\byear{1988})
\doiurl{10.1137/0326081}
\end{barticle}
\endbibitem

\bibitem[\protect\citeauthoryear{Schwartz}{1966}]{schwartz1966theorie}
\begin{bbook}
\bauthor{\bsnm{Schwartz}, \binits{L.}}:
\bbtitle{Th\'{e}orie des Distributions}.
\bsertitle{Publications de l'Institut de Math\'{e}matique de l'Universit\'{e}
  de Strasbourg, IX-X},
p. \bfpage{420}.
\bpublisher{Hermann, Paris}, \blocation{???}
(\byear{1966})
\end{bbook}
\endbibitem

\bibitem[\protect\citeauthoryear{Coron}{2007}]{coron}
\begin{bbook}
\bauthor{\bsnm{Coron}, \binits{J.-M.}}:
\bbtitle{Control and Nonlinearity}.
\bsertitle{Mathematical Surveys and Monographs},
vol. \bseriesno{136},
p. \bfpage{426}.
\bpublisher{American Mathematical Society, Providence, RI}, \blocation{???}
(\byear{2007}).
\doiurl{10.1090/surv/136} .
\burl{https://doi.org/10.1090/surv/136}
\end{bbook}
\endbibitem

\bibitem[\protect\citeauthoryear{van Eijndhoven and
  Habets}{2003}]{van2003equivalence}
\begin{barticle}
\bauthor{\bsnm{Eijndhoven}, \binits{S.J.}},
\bauthor{\bsnm{Habets}, \binits{L.}}:
\batitle{Equivalence of convolution systems in a behavioral framework}.
\bjtitle{Mathematics of Control, Signals and Systems}
\bvolume{16},
\bfpage{175}--\blpage{206}
(\byear{2003})
\end{barticle}
\endbibitem

\bibitem[\protect\citeauthoryear{Fuhrmann}{1968}]{Fuhrmann_corona}
\begin{barticle}
\bauthor{\bsnm{Fuhrmann}, \binits{P.A.}}:
\batitle{On the corona theorem and its application to spectral problems in
  {H}ilbert space}.
\bjtitle{Trans. Amer. Math. Soc.}
\bvolume{132},
\bfpage{55}--\blpage{66}
(\byear{1968})
\doiurl{10.2307/1994881}
\end{barticle}
\endbibitem

\bibitem[\protect\citeauthoryear{Nikol'Skii}{2012}]{nikol2012treatise}
\begin{bbook}
\bauthor{\bsnm{Nikol'Skii}, \binits{N.K.}}:
\bbtitle{Treatise on the Shift Operator: Spectral Function Theory}
vol. \bseriesno{273}.
\bpublisher{Springer}, \blocation{???}
(\byear{2012})
\end{bbook}
\endbibitem

\bibitem[\protect\citeauthoryear{Carleson}{1962}]{carleson1962interpolations}
\begin{botherref}
\oauthor{\bsnm{Carleson}, \binits{L.}}:
Interpolations by bounded analytic functions and the corona problem.
Annals of Mathematics,
547--559
(1962)
\end{botherref}
\endbibitem

\end{thebibliography}

\end{document}